\DeclareSymbolFont{SY}{U}{psy}{m}{n}
\DeclareMathSymbol{\emptyset}{\mathord}{SY}{'306}
\theoremstyle{plain}
\newtheorem{thm}{Theorem}[section]
\newtheorem{cor}[thm]{Corollary}
\newtheorem{lem}[thm]{Lemma}
\newtheorem{prop}[thm]{Proposition}
\theoremstyle{definition}
\newtheorem{defn}[thm]{Definition}
\newtheorem{rem}[thm]{Remark}
\newtheorem{example}[thm]{Example}
\newtheorem{question}[thm]{Question}
\newtheorem*{claim}{Claim}
\newtheorem*{fact}{Fact}
\numberwithin{equation}{section}
\begin{document}

\title{On the quasi-similarity of operators with flag structure}% \uppercase\expandafter{\romannumeral1}}

\author{Yufang Xie, Shanshan Ji, Jing Xu and Kui Ji$^{*}$}

\thanks{*Corresponding author}

\curraddr[Y.F. Xie and K. Ji ]{School of Mathematical Sciences, Hebei Normal University,
Shijiazhuang, Hebei 050024, China}
\curraddr[S.S. Ji]{College of Statistics and Mathematics, Hebei University of Economics and Business, Shijiazhuang, Hebei 050061, China}
\curraddr[J. Xu]{School of Mathematical and Sciences, Hebei GEO University, Shijiazhuang, Hebei 050031, China}

\email[Y.F. Xie]{xieyufangmath@outlook.com}
\email[S.S. Ji]{jishanshan15@outlook.com}
\email[J. Xu]{xujingmath@outlook.com}
\email[K. Ji]{jikui@hebtu.edu.cn, jikuikui@163.com}

\keywords{flag structure, strong flag structure, quasi-similarity, similarity, strong irreducibility}
\maketitle

\vspace{0.001cm}
\begin{center}
\text{Dedicated to Prof. Gadadhar Misra's 70th birthday}
\end{center}
\vspace{0.001cm}

\begin{abstract}
Let $\mathcal{A}$ denote the operator class in which every nonzero intertwiner between two operators in $\mathcal{A}$ has dense range.
Utilizing the operators in $\mathcal{A}$ as atoms and the flag structure as connection, we introduce an extended operator class $\mathcal{F}_{n}(\mathcal{A}) (n\in\mathbb{N}\ \mbox{and}\ n\ge2)$, along with its subclass $\mathcal{OF}_{n}(\mathcal{A})$.
%We then obtain a clear characterization of the intertwining operator between any two operators with certain properties in $\mathcal{F}_{n}(\mathcal{A})$.
We establish that, under certain conditions, quasi-similarity within the classes $\mathcal{F}_{n}(\mathcal{A})$ and $\mathcal{OF}_{n}(\mathcal{A})$ is equivalent, which provides an approach to describing quasi-similarity and similarity for high-index Fredholm operators.
Furthermore, we demonstrate that quasi-similarity implies similarity for a large number of operators in $\mathcal{F}_{n}(\mathcal{A})$, thereby yielding a partial answer to the question raised by D.A. Herrero in \cite{Herrero} and generalizing existing numerous results.
As applications, several examples of quasi-similarity and similarity involving multiplication operators on vector-valued reproducing kernel Hilbert spaces are presented.
Lastly, we show that the strong irreducibility is preserved up to quasi-similarity within the class $\mathcal{F}_{n}(\mathcal{A})$. This offers a partial solution to C.L. Jiang's question in \cite{JW}.
\end{abstract}

\section{Introduction}
Let $\mathcal{H}$ and $\tilde{\mathcal{H}}$ be two complex separable Hilbert spaces, and let $\mathcal{B}(\mathcal{H},\tilde{\mathcal{H}})$ be the algebra of all bounded linear operators from $\mathcal{H}$ to $\tilde{\mathcal{H}}$.
If $\mathcal{H}=\tilde{\mathcal{H}}$, then denote this algebra by $\mathcal{B}(\mathcal{H})$.
Generally, if a bounded linear operator $X$ is injective and its range is dense, then $X$ is said to be {\bf quasi-affine}.
Given two operators $T\in\mathcal{B}(\mathcal{H})$ and $\tilde{T}\in\mathcal{B}(\tilde{\mathcal{H}})$, if there are two quasi-affine intertwiners $X$ and $Y$ such that $XT=\tilde{T}X$ and $TY=Y\tilde{T}$, then $T$ is {\bf quasi-similar} to $\tilde{T}$, denoted by $T\sim_{q.s}\tilde{T}$.
To study contractive operators on Hilbert spaces, the concept of quasi-similarity was first introduced by B. Sz.-Nagy and C. Foias in their classic book \cite{BF}, in which they proved the existence of two quasi-similar equivalence operators that maintain  superinvariant subspaces.
They also characterized the quasi-similar classification of unitary operators, unilateral shift operators, and operators in the $C_0$ class by studying the structure and properties of these operators.

In \cite{Uchiyama}, M. Uchiyama extended the class of Cowen-Douglas operators $\mathcal{B}_{n}(\Omega)$, which was originally defined by M.J. Cowen and R.G. Douglas in \cite{CD} and exhibit characteristics of complex geometry.
This was achieved using the language of holomorphic curves.
M. Uchiyama also established the quasi-affinity and quasi-similarity classification theorems for contractive operators in this class and the direct sums of unilateral backward shift operators, via tools such as the model theorem and the exact sequence.
Subsequently, M.F. Gamal in \cite{GMF1,GMF2} further generalized the above result on quasi-affinity to power bounded operators, and showed that it does not hold in the quasi-similarity case by constructing counterexamples.
To date, a number of meaningful results concerning the quasi-similarity of operators have been obtained, as detailed in references \cite{Apostol. C, SC, RGD, L.A. Fialkow1, L.A. Fialkow2, HWW, W}.

Recall that two operators $T\in\mathcal{B}(\mathcal{H})$ and $\tilde{T}\in\mathcal{B}(\tilde{\mathcal{H}})$ are said to be \textbf{similar} and are denoted by $T\sim_{s}\tilde{T}$, if there is an invertible operator $X\in\mathcal{B}(\mathcal{H},\tilde{\mathcal{H}})$ satisfying $XT=\tilde{T}X$.
In recent years, research results on similarity equivalence have become increasingly abundant, establishing profound connections between various mathematical branches such as operator theory, algebraic K-theory, reproducing kernel theory and complex geometry, see \cite{JJK, JJM, JJW, JW2, Shields, Uchiyama, XJ}.
Notably, quasi-similarity is equivalent to similarity in finite dimensional spaces.
However, it becomes a substantially weaker relation in infinite dimensional spaces, allowing for differences in the spectrum, essential spectrum, approximate point spectrum, weyl spectrum and compactness between quasi-similar operators.
In spite of these significant findings mentioned above, quasi-similarity remains a subtle relation when consider the following topics:

\textbf{\uppercase\expandafter{\romannumeral1}.\ The existence of two quasi-affine intertwining operators.}\
For a general pair of operators, it's necessary to find two quasi-affine intertwining operators to show their quasi-similarity. Nevertheless, the primary problem in this procedure lies in the boundedness of the two quasi-affine intertwining operators, owing to the fact that densely defined operators may not possess extensions to globally defined bounded linear operators.

\textbf{\uppercase\expandafter{\romannumeral2}.\ Quasi-similarity classification.}
In finite dimensional spaces, the well-known Jordan canonical form theorem\textemdash or, equivalently, similarity invariants\textemdash provides a complete classification of operators up to quasi-similarity.
In infinite dimensional settings, however, the quasi-similarity relation no longer preserves the spectrum of operators (see \cite{Herrero4}), which adds a degree of difficulty to quasi-similarity classification of operators.
%In finite dimensional matrix theory, the Jordan canonical form theorem tells us that each finited imensional matrix is similar to a direct sum of some Jordan blocks.
%The development of the theory of quasi-similarity has also been influenced by the topic of establishing this theorem in infinite-dimensional spaces.
For another,
%there is no universal standard model theory.
%Several scholars have supported using the quasi-similarity relation to study Jordan blocks in infinite-dimensional Hilbert spaces.
%Operators that are quasi-similar to the Jordan operator were found by \cite{ADF,DD,PFV}.
%\textbf{\uppercase\expandafter{\romannumeral3}.\ Quasi-simialrity orbit.}
a full description of the quasi-similarity orbit is known for a rather small set of operators. Results along this line are in \cite{Apostol. C, ADF, DD, Z}.
%In \cite{Apostol. C}, C. Apostol characertized the quasi-simialrity orbit of normal operators. Subsequently, C. Apostol, R.G. Douglas and C. Foias gave a depiction for operators that are quasi-similar to  nilpotent operators in \cite{ADF}. What's more, in \cite{DD}, K.R. Davidson and D.A. Herrero portrayed the quasi-simialrity orbit of bitriangular operators. Recently, W. Zhang showed in \cite{Z} that when an operator is quasi-similar to some complex symmetric operator.

%there is the possibility of losing more properties of the operator.
%Therefore, the quasi-similarity equivalence allows for greater operator flexibility.

\textbf{\uppercase\expandafter{\romannumeral3}.\ Strong irreducibility.}
%Regarding operators on infinite dimensional spaces, a strongly irreducible operator is a suitable substitute for a Jordan block.
Recall that an operator $T\in\mathcal{B}(\mathcal{H})$ is called {\bf strongly irreducible} if $T$ does not commute with any nontrivial idempotent operator, which is denoted by $T\in(SI)$.
%There are also some equivalent definitions of strongly irreducible operators (see \cite{JW}).
%In the 1980s, many scholars conducted extensive research on these operators. Further details can be found in \cite{FJW, Herrero3, JJ, JW3, JW, JW2}.
%It's widely known that the strong irreducibility is invariant under similarity.
In \cite{Apostol. C}, C. Apostol showed that an operator which is quasi-similar to a normal operator must be strongly reducible.
This led many scholars to conjecture that the quasi-similarity transformation no longer preserves the strong irreducibility of the operators.
Indeed, the Example 2.35 in \cite{JW} confirms this conjecture.
Nevertheless, there are still many classes of operators that preserve strong irreducibility under quasi-similarity.
C.L. Jiang and H. He, in \cite{JH}, proved that if $T\in(SI)$ belongs to the Cowen-Douglas class $\mathcal{B}_{n}(\Omega)$ and $\tilde{T}\sim_{q.s}T$, then so is $\tilde{T}$.

Inspired by these literature, this article will mainly consider the following two questions:
\begin{question}[D.A. Herrero, \cite{Herrero}]\label{question1.2}
%Under what conditions on $T\in\mathcal{B}(\mathcal{H})$ does ``$A\sim_{q.s}T$" imply ``$A\sim_{s}T$"?
Under what conditions on $T\in\mathcal{B}(\mathfrak{X})$ does ``$A\sim_{q.s}T$" imply ``$A\sim_{s}T$", where $\mathfrak{X}$ is a complex Banach space?
\end{question}

\begin{question}[C.L. Jiang, \cite{JW}]\label{question1.1}
What kinds of strongly irreducible operators preserve strong irreducibility under quasi-similarity?
\end{question}

Question \ref{question1.2} addresses the conditions under which a quasi-similarity relation between operators can be strengthened to a similarity relation.
This problem has been investigated in various contexts.
Notably, A.L. Lambert demonstrated in \cite{A.L. Lambert} (see also \cite{Shields}) that two injective unilateral shift operators that are quasi-similar must in fact be similar.
Moreover, L.R. Williams established in \cite{Williams} that the same is true for quasi-similar hyponormal (possibly noninjective) unilateral weighted shifts.
Additionally, L.A. Fialkow proved in \cite{L.A. Fialkow1} that for two bilateral weighted shift operators, quasi-similarity combined with the invertibility of one operator guarantees their similarity.
In \cite{Uchiyama}, M. Uchiyama showed that any two quasi-similar homogeneous operators in the Cowen-Douglas class with index one are necessarily similar.

%C.L. Jiang and Z.Y. Wang proved in their monograph \cite{JW} that each operator in the Cowen-Douglas operator class $\mathcal{B}_n(\Omega)$ can be represented as an $n\times n$ matrix in upper-triangle form.
%Building on this structural theorem, the fourth author and his collaborators introduced the flag structure into the study of such operators in \cite{JJKM}.
%We note that $\mathcal{FB}_n(\Omega)$ is the set of operators with the flag structure in the Cowen-Douglas operator class, $n\geq2$.
%This class contains a large number of operators, including high-index irreducible homogeneous operators in \cite{JJM}.
%Moreover, the completely unitary invariants of operators in this class only involve the curvature and the second fundamental form of the line bundle corresponding to the diagonal operator.
%In \cite{XJ}, the first and fourth authors further defined the strong flag structure, and operators with this structure are norm-dense in $\mathcal{B}_n(\Omega)$ up to similarity.
%We denote all such operators in $\mathcal{B}_n(\Omega)$ as $\mathcal{OFB}_n(\Omega)$.
%The similarity equivalence problem of a large number of operators in $\mathcal{FB}_n(\Omega)$ can be transformed into the similarity equivalence problem for operators in $\mathcal{OFB}_n(\Omega)$.

To further investigate the aforementioned two open problems, the present work introduces a new subclass $\mathcal{A}$ of bounded linear operators exhibiting wider applicability.
The Cowen-Douglas class $\mathcal{B}_{1}(\Omega)$ serves as an example of class $\mathcal{A}$.
However, $\mathcal{A}$ is not limited to such operators and may in fact contain numerous operators that lie outside the Cowen-Douglas class with index one.
Then, just as molecules are composed of atoms, our aim is to obtain operators in the form of an upper-triangular operator matrix that can be regarded as ``molecules", with the operators in $\mathcal{A}$  being the ``atoms" and the flag structure being the ``bonding". %The flag structure was introduced by the fourth author and his collaborators in \cite{JJKM}.
Let $\mathcal{F}_{n}(\mathcal{A})$ denote the class of operators obtained in this way.
%It's easy to know that the class $\mathcal{F}_{n}(\mathcal{A})$ is exactly the class $\mathcal{FB}_n(\Omega)$, a subclass of $\mathcal{B}_{n}(\Omega)$ and introduced by the fourth author and his collaborators in \cite{JJKM}, when consider $\mathcal{A}$ as $\mathcal{B}_1(\Omega))$.
We will now consider Question \ref{question1.2} and Question \ref{question1.1} concerning this new and large operator class, and provide sufficient conditions for their validity.
Of particular interest is a newly defined subclass $\mathcal{OF}_{n}(\mathcal{A})$ exhibiting strong flag structure.
As Definition \ref{definition4.2} indicates, this subclass admits a clear and simple structure.
It is of fundamental importance as it serves as a bridge that effectively transforms quasi-similarity relations between the classes $\mathcal{F}_{n}(\mathcal{A})$ and $\mathcal{OF}_{n}(\mathcal{A})$, yielding remarkable reductions in complexity.

\begin{comment}
As shown, the classification problem of operators are closely related to the property of their intertwining operators, which has always been a hot topic in the research of operator theory. In \cite{Uchiyama},  M. Uchiyama showed a characterization for contractions in the Cowen-Douglas class which are quasi-similar to the adjoint of the multiplication operators $S_{n}^{*}$ on the Hardy space with finite multiplicity $n$.
\begin{lem}[\cite{Uchiyama}, Theorem 3.3]
Let $T\in\mathcal{B}_{n}(\mathbb{D})$ and $\Vert T\Vert\le1$. Then $T\prec S_{n}^{*}$ if and only if there exists a frame $\{\gamma_{1},\cdots,\gamma_{n}\}$ for the Herimitian holomorphic vectore bundle $\mathcal{E}_{T}$ such that
\begin{equation*}
\setlength\abovedisplayskip{3pt}
\setlength\belowdisplayskip{3pt}
\sup\limits_{w\in\mathbb{D}}(1-\vert w\vert^{2})\Vert\gamma_{i}(w)\Vert^{2}<\infty,\ \forall\ 1\le i\le n.
\end{equation*}
\end{lem}
\end{comment}

The article is structured as follows.
In section \ref{sec2}, the fundamental concepts and conclusions are outlined.
In section \ref{sec3}, we introduce a new class $\mathcal{A}$ of bounded linear operators. Then in the language of operator matrices and incorporating flag structure, we define a class $\mathcal{F}_{n}(\mathcal{A})$ of high-index Fredholm operators. Subsequently, we investigate the structure of the intertwining operator for any pair of operators within this class (specific details can be found in the Appendix). More importantly, we identify a subclass $\mathcal{OF}_{n}(\mathcal{A})$ with strong flag structure. We further establish that the quasi-similarity relationship among operators in this subclass is equivalent to that of operators in $\mathcal{F}_{n}(\mathcal{A})$.
In section \ref{sec4}, %a close relationship is identified between similarity for operators derived in the previous section, satisfying certain conditions, and similarity for operators with simpler structures.
we demonstrate that a class of operators can be obtained for which similarity and quasi-similarity are equivalent, thereby providing a partial answer to Question \ref{question1.2} posed by D.A. Herrero.
Since the class $\mathcal{F}_{n}(\mathcal{A})$ contains many adjoints of multiplication operators on reproducing kernel Hilbert spaces, we also discuss their similarity and quasi-similarity.
In section \ref{sec5}, we show that the high-index operators introduced in Section \ref{sec3} preserve the strong irreducibility  under quasi-similarity.
%All of these operators are generated by operators in this new class. %and possess a flag structure.
This result offers a partial response to Question \ref{question1.1} posed by C.L. Jiang.

\section{preliminaries on weighted shift operators}\label{sec2}

A unilateral weighted shift operator $T$ on a Hilbert space $\mathcal{H}$ maps each vector in an orthonormal basis $\{e_{n}\}_{n\ge0}$ to a scalar multiple of the next vector: $Te_{n}=w_{n}e_{n+1}$ for all $n\ge0$, where $\{w_{n}\}_{n\ge0}$ is the weight sequence.
If none of the weights are zero, then the weighted shift operator $T$ is injective.
In \cite{Shields}, another way of viewing injective unilateral weighted shift operators is to represent them as the multiplication operator $M_{z}$ on a Hilbert space of formal power series, $\mbox{H}^{2}(\beta)$, where
$$\mbox{H}^{2}(\beta)=\left\{f(z)=\sum\limits_{n=0}^{\infty}\hat{f}(n)z^{n}:\Vert f\Vert^{2}=\sum\limits_{n=0}^{\infty}\vert\hat{f}(n)\vert^{2}(\beta(n))^{2}\right\}$$
for $\beta(0)=1,\ \beta(n)=\prod\limits_{k=0}^{n-1}w_{k},\ n>0$.
Denoted by  $$\mbox{H}^{\infty}(\beta)=\left\{\phi(z)=\sum\limits_{n=0}^{\infty}\hat{\phi}(n)z^{n}:\phi\mbox{H}^{2}(\beta)\subset\mbox{H}^{2}(\beta)\right\}.$$

For the sake of convenience, the results we need are listed here:

\begin{lem}[\cite{Shields}, Theorem 3]\label{Lemma2.1}
If $A$ is an operator on $\mbox{H}^{2}(\beta)$ that commutes with $M_{z}$, then $A=M_{\phi}$ for some $\phi\in\mbox{H}^{\infty}(\beta)$.
\end{lem}

\begin{lem}[\cite{Shields}, Theorem 10]\label{lemma2.2}
Let $T$ be an injective unilateral weighted shift operator with weight sequence $\{w_{n}\}_{n\ge0}$, represented as $M_{z}$ on the space $\mbox{H}^{2}(\beta)$.
If $\vert w\vert<r(T)$ and if $\phi\in\mbox{H}^{\infty}(\beta)$, where $r(T)$ is the radius of the spectrum of $T$, then the power series of $\phi$ converges at $w$ and $\vert\phi(w)\vert\le\Vert M_{\phi}\Vert$.
\end{lem}

For $T\in\mathcal{B}(\mathcal{H})$, the \textbf{commutant\ algebra} of $T$ is a weakly closed subalgebra with identity of $\mathcal{B}(\mathcal{H})$,
\begin{equation*}
\setlength\abovedisplayskip{3pt}
\setlength\belowdisplayskip{3pt}
\{T\}':=\{A\in\mathcal{B}(\mathcal{H}):TA=AT\}=\mbox{the\ commutant\ algebra\ of\ T}.
\end{equation*}
We denote the Jacobson radical of $\{T\}'$ by $rad\ \{T\}'$, and say that $\{T\}'$ is \textbf{semi-simple} if $rad\ \{T\}'=\{0\}$.
In this case, a basic fact is that there are no nonzero quasi-nilpotent operators in $\{T\}'$, where an operator $P\in\mathcal{B}(\mathcal{H})$ is said to be \textbf{quasi-nilpotent} if $\lim\limits_{n\rightarrow\infty}\Vert P^{n}\Vert^{\frac{1}{n}}=0$.

\begin{prop}\label{250721}
Let $T$ be an injective unilateral weighted shift operator with weight sequence $\{w_{n}\}_{n\ge0}$ satisfying $r(T)\ne0$, represented as $M_{z}$ on the space $\mbox{H}^{2}(\beta)$.
In this case, the commutant algebra of $T$ is semi-simple.
\end{prop}
\begin{proof}
Suppose that $X$ is a quasi-nilpotent operator in $\{T\}'$. By Lemma \ref{Lemma2.1}, $X$ admits a representation $X=M_{\phi}$ for some $\phi\in\mbox{H}^{\infty}(\beta)$.
Since $r(T)\ne0$, there exists an open set $U\subset\sigma(T)$ that is symmetric about the origin.
Applying Lemma \ref{lemma2.2}, we deduce that the function $\phi$ is analytic on $\sigma(T)$.
Consequently, $\phi(U)\subset\phi(\sigma(T))=\sigma(M_{\phi})=\{0\}$, which implies that $\phi\equiv0$. It follows that $X=0$, proving that $\{T\}'$ is semi-simple. This completes the proof.
\end{proof}

For $A,B\in\mathcal{B}(\mathcal{H})$, the \textbf{Rosenblum operator} $\tau_{A,B}:\mathcal{B}(\mathcal{H})\rightarrow\mathcal{B}(\mathcal{H})$ is defined as
\begin{equation*}
\setlength\abovedisplayskip{3pt}
\setlength\belowdisplayskip{3pt}
\tau_{A,B}(X):=AX-XB,\ X\in\mathcal{B}(\mathcal{H}).
\end{equation*}
If $A=B$, write it as $\tau_{A}$.

\begin{lem}[\cite{JJK}, Proposition 3.5]\label{lemma4.5}
Let $A$ and $B$ be two backward weighted shift operators acting on the Hilbert space $\mathcal{H}$ with the weight sequences $\{a_{i}\}_{i\ge0}$ and $\{b_{i}\}_{i\ge0}$, respectively. If $X$ intertwines $A$ and $B$, i.e. $AX=XB$, then there exists an orthonormal basis $\{e_{i}\}_{i\ge0}$ of $\mathcal{H}$ such that $X$ has matrix representation $X=\big((x_{i,j})\big)_{i,j\ge0}$ with respect to this basis,
%\begin{equation*}
%\setlength\abovedisplayskip{3pt}
%\setlength\belowdisplayskip{3pt}
%X=\left( \begin{smallmatrix}
%		x_{0,0}&x_{0,1}&x_{0,2}&\cdots&x_{0,n}&\cdots\\
%		&x_{1,1}&x_{1,2}&\cdots&x_{1,n}&\cdots\\
%		&&\ddots&\ddots&\vdots&\cdots\\
%		&&&x_{n-1,n-1}&x_{n-1,n}&\cdots\\
%		&&&&x_{n,n}&\ddots\\
%		&&&&&\ddots
%	\end{smallmatrix}\right),
%\end{equation*}
where $x_{i,j}=0$ if $i>j$ and $x_{n+1,n+j}=\frac{\prod\limits_{k=j-1}^{n+j-1}b_{k}}{\prod\limits_{k=0}^{n}a_{k}}x_{0,j-1}$, $j=1,2,\cdots$, $n=0,1,\cdots$. Moreover, if $\lim\limits_{n\rightarrow\infty}n\frac{\prod\limits_{k=0}^{n-1}b_{k}}{\prod\limits_{k=0}^{n-1}a_{k}}=\infty$, then $\ker\tau_{A,B}\cap\mbox{ran}\ \tau_{A,B}=\{0\}$.
\end{lem}

\begin{lem}\cite{Shields}\label{lemma2.5}
If two injective unilateral shifts are quasi-similar, then they are similar.
\end{lem}

\section{A new class of operators}\label{sec3}
In this section, we will introduce a new class $\mathcal{A}$ of bounded linear operators acting on a Hilbert space.
Building upon the operators in $\mathcal{A}$ and $\{X\in\ker\tau_{T,\tilde{T}}-\{0\}:T,\tilde{T}\in\mathcal{A}\}$, we further define a high-index operator class $\mathcal{F}_{n}(\mathcal{A})$, each exhibiting a flag structure. Additionally, we identify a structurally well-behaved subclass $\mathcal{OF}_{n}(\mathcal{A})\subset\mathcal{F}_{n}(\mathcal{A})$, characterized by a clear and simplified form. We then explore the similarity relationship between $\mathcal{F}_{n}(\mathcal{A})$ and $\mathcal{OF}_{n}(\mathcal{A})$.
%Thus, the intertwining relation of any two operators in $\mathcal{F}_{n}(\mathcal{A})$ shows the intertwining relation between the corresponding atoms.
This result enables the reduction of the quasi-similarity problem in a general class $\mathcal{F}_{n}(\mathcal{A})$ to a corresponding problem in a more structured subclass $\mathcal{OF}_{n}(\mathcal{A})$, thus establishing a method to address the equivalence between quasi-similarity and similarity for high-index Fredholm operators.

Firstly, we will explain the meaning of symbol $\mathcal{A}$.

\begin{defn}\label{definition3.1}
Let $\mathcal{A}$ be an operator class such that all operators in it are bounded and
\begin{equation}\label{equation3.1}
\setlength\abovedisplayskip{3pt}
\setlength\belowdisplayskip{3pt}
\mbox{for\ any}\ T\ \mbox{and}\ \tilde{T}\ \mbox{in}\ \mathcal{A},\ X\in\ker\tau_{T,\tilde{T}}-\{0\}\ \mbox{has\ dense\ range}.
\end{equation}
\end{defn}

As established by Proposition 2.3 in \cite{JJKM}, the Cowen-Douglas class $\mathcal{B}_{1}(\Omega)$ possesses property (\ref{equation3.1}).
We now demonstrate that this property can extend to the class of backward weighted shift operators. Specifically, we prove:
%Furthermore, the following Proposition shows the backward weighted shift operators class also satisfies property (\ref{equation3.1}).

\begin{prop}\label{proposition3.2}
Let $A$ and $B$ be two backward weighted shift operators with nonzero weighted sequences $\{a_{i}\}_{i\ge0}$ and $\{b_{i}\}_{i\ge0}$, respectively. Suppose that the bounded linear operator $X$ intertwines $A$ and $B$, i.e. $XA=BX$. Then $X$ is nonzero if and only if $X^{*}$ is injective, which is equivalent to saying that $X$ has dense range.
\end{prop}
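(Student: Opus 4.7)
Of the three conditions ``$X\ne 0$,'' ``$X^{*}$ is injective,'' and ``$X$ has dense range,'' the equivalence of the latter two is the standard Hilbert-space identity $\ker(X^{*})=\mbox{ran}(X)^{\perp}$, and ``dense range $\Rightarrow$ $X\ne 0$'' is immediate since the codomain is infinite-dimensional. Thus the only substantive implication is $X\ne 0 \Rightarrow \mbox{ran}(X)$ is dense.

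For this I would invoke Lemma \ref{lemma4.5}. Although that result is stated for intertwiners $Y$ satisfying $AY=YB$, the hypothesis $XA=BX$ of the present proposition is of exactly the same form after swapping the roles of $A$ and $B$. Lemma \ref{lemma4.5} then supplies an orthogonal basis $\{e_{i}\}_{i=0}^{\infty}$ in which $X=(x_{i,m})$ is upper-triangular, and each off-diagonal entry at ``distance'' $d:=m-i$ from the main diagonal is pinned down by the single first-row entry $x_{0,d}$ through the formula
$$
x_{i,i+d}\;=\;\frac{\prod_{k=d}^{d+i-1}a_{k}}{\prod_{k=0}^{i-1}b_{k}}\,x_{0,d},\qquad i\ge 1,\ d\ge 0.
$$
Because $A,B\in\mathcal{B}_{1}(\mathbb{D})$ forces every weight $a_{k},b_{k}$ to be non-zero, each super-diagonal of $X$ is either identically zero or nowhere vanishing, depending solely on whether $x_{0,d}=0$.

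Assuming $X\ne 0$, set $d_{0}:=\min\{d\ge 0: x_{0,d}\ne 0\}$. Because $X$ vanishes on every super-diagonal of distance $<d_{0}$, one computes $Xe_{d_{0}}=x_{0,d_{0}}\,e_{0}$, placing $e_{0}\in\overline{\mbox{ran}(X)}$. For $n\ge 1$ the vector $Xe_{d_{0}+n}$ is supported in $e_{0},\ldots,e_{n}$, and its top coefficient (that of $e_{n}$) equals $\frac{\prod_{k=d_{0}}^{d_{0}+n-1}a_{k}}{\prod_{k=0}^{n-1}b_{k}}\,x_{0,d_{0}}\ne 0$. A straightforward induction on $n$---subtracting off the $e_{0},\ldots,e_{n-1}$ components, which are already in $\overline{\mbox{ran}(X)}$---then places every $e_{n}$ in $\overline{\mbox{ran}(X)}$, and density of $\mbox{ran}(X)$ follows.

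The main technical challenge is purely bookkeeping: re-indexing Lemma \ref{lemma4.5}'s $(n+1,\,n+j)$-parametrisation into the ``distance'' variable $d$ and keeping the $A\leftrightarrow B$ swap consistent throughout. Once that is done, the rest is an elementary algebraic cascade requiring no convergence, spectral, or boundedness input beyond the non-vanishing of the shift weights.
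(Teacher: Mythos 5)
Your proposal is correct and follows essentially the same line as the paper: both rest on the triangular matrix form of the intertwiner whose entries along each diagonal are non-zero multiples of a single first-row (resp.\ first-column) entry, followed by an induction starting from the minimal non-vanishing index. The only difference is cosmetic and dual in nature --- the paper passes to the lower-triangular $X^{*}$ and shows $\ker X^{*}=\{0\}$ by inducting on the coefficients of a kernel vector, while you keep the upper-triangular $X$ (via Lemma \ref{lemma4.5} with the roles of $A$ and $B$ swapped, which is legitimate) and induct to place each basis vector $e_{n}$ in $\overline{\mathrm{ran}\,X}$; your re-indexing of the entry formula is consistent with that swap.
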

\begin{proof}
Suppose that the operators $A$, $B$ and $X$ act on a Hilbert space $\mathcal{H}$, and that the set of orthonormal basis which enable $A$ and $B$ to be shift operators is $\{e_{i}\}_{i\ge0}$.
Let $X^{*}=\big((x_{i,j})\big)_{i,j\ge0}$ be the matrix representation of the operator $X^{*}$ with respect to $\{e_{i}\}_{i\ge0}$. Following the relation $XA=BX$, we obtain that $A^{*}X^{*}=X^{*}B^{*}$ and $X^{*}$ is in lower-triangular form, where
\begin{equation}\label{equation3.2}
\setlength\abovedisplayskip{3pt}
\setlength\belowdisplayskip{3pt}
x_{n,n}=\frac{\prod\limits_{k=0}^{n-1}a_{k}}{\prod\limits_{k=0}^{n-1}b_{k}}x_{0,0},\ x_{n+j,n}=\frac{\prod\limits_{k=j}^{n}a_{k}}{\prod\limits_{k=0}^{n-1}b_{k}}x_{j,0},\ n,j=1,2,\cdots.
\end{equation}
It's obvious that the injectivity of $X^{*}$ will imply that $X$ is nonzero.
Conversely, if $X$ is nonzero, which is equivalent to $X^{*}$ being nonzero, then $X^{*}(e_{0})$ must be nonzero.
Subsequently, pick $q$ to be the smallest indices that $x_{q,0}\ne0$ and assume that $x=\sum\limits_{k=0}^{\infty}\langle x,e_{k}\rangle e_{k}\in\ker X^{*}$.
It follows that
\begin{equation*}
\setlength\abovedisplayskip{3pt}
\setlength\belowdisplayskip{3pt}
0=X^{*}(\sum\limits_{k=0}^{\infty}\langle x,e_{k}\rangle e_{k})
=\sum\limits_{k=0}^{\infty}\langle x,e_{k}\rangle X^{*}(e_{k})
=\sum\limits_{k=0}^{\infty}\langle x,e_{k}\rangle(\sum\limits_{i=k}^{\infty}x_{i,k}e_{i})
=\sum\limits_{n=0}^{\infty}\big(\sum\limits_{k=0}^{n}\langle x,e_{k}\rangle x_{n,k}\big)e_{n},
\end{equation*}
which shows that $\sum\limits_{k=0}^{n}\langle x,e_{k}\rangle x_{n,k}=0$ for all $n\ge0$.
Since $x_{k,0}=0$ for $k=0,\cdots,q$, and equation (\ref{equation3.2}), we have the equality $x_{q+m,l}=0$ for $l\ge m+1$ and $m\ge0$.
Therefore, using the fact that $\sum\limits_{k=0}^{q}\langle x,e_{k}\rangle x_{q,k}=0$ and $x_{q,0}\ne0$, we obtain that $\langle x,e_{0}\rangle=0$.
Furthermore, from
\begin{equation*}
\setlength\abovedisplayskip{3pt}
\setlength\belowdisplayskip{3pt}
0=\sum\limits_{k=0}^{q+1}\langle x,e_{k}\rangle x_{q+1,k}=\sum\limits_{k=0}^{1}\langle x,e_{k}\rangle x_{q+1,k}=\langle x,e_{1}\rangle x_{q+1,1},
\end{equation*}
one can see that $\langle x,e_{1}\rangle=0$.
Now suppose that we have proved $\langle x,e_{n}\rangle=0$ for $n=0,\cdots,k$.
Then together with $x_{q+k+1,k+1}\ne0$ and
\begin{equation*}
\setlength\abovedisplayskip{3pt}
\setlength\belowdisplayskip{3pt}
0=\sum\limits_{n=0}^{q+k+1}\langle x,e_{n}\rangle x_{q+k+1,n}=\sum\limits_{n=k+1}^{q+k+1}\langle x,e_{n}\rangle x_{q+k+1,n}=\langle x,e_{k+1}\rangle x_{q+k+1,k+1},
\end{equation*}
we have $\langle x,e_{k+1}\rangle=0$. Thus $x=0$, which implies that $X^{*}$ is injective.
\end{proof}

\begin{rem}
Since a backward weighted shift operator (or in short, an \textbf{ONB shift}) is not necessarily a Cowen-Douglas operator with index one, the operator class $\mathcal{A}$ introduced above strictly encompasses the Cowen-Douglas class $\mathcal{B}_{1}(\Omega)$. Moreover, we've discovered a class of operators\textemdash\textbf{M-basis shifts} (cf. \cite{GS2})\textemdash which is more general than the Cowen-Douglas class $\mathcal{B}_{1}(\Omega)$ and the class of ONB shifts, while satisfying property (\ref{equation3.1}).
Specific details will appear in a forthcoming work, and we have also investigated other characteristics of these shifts.
\end{rem}

We proceed to utilize the flag structure as bonding to create a new class in the language of upper-triangular operator matrices. What'more, we think that condition (3) in the following definition corresponds to the \textbf{flag structure} of the operator $T$.

\begin{defn}\label{definition3.3}
We say that an operator $T\in\mathcal{B}(\mathcal{H})$ is in the class \boldsymbol{$\mathcal{F}_{n}(\mathcal{A})(n\ge2)$} if $T$ satisfies the following conditions:
\begin{enumerate}
\item $T$ can be written as an $n\times n$ upper-triangular matrix in the form $T=\big((T_{i,j})\big)_{i,j=1}^{n}$ with respect to a topological direct decomposition of $\mathcal{H}=\mathcal{H}_{1}\oplus\cdots\oplus\mathcal{H}_{n}$;

\item $T_{i,i}\in\mathcal{A}$ for $1\le i\le n$;% and $\{T_{i,i}\}'$ is semi-simple commutative Banach algebra;

\item $T_{i,i+1}\ne 0$ and $T_{i,i}T_{i,i+1}=T_{i,i+1}T_{i+1,i+1}$ for $1\le i\le n-1$.

\end{enumerate}
\end{defn}

Moreover, Definition \ref{definition4.2} reveals the existence of a subclass characterized by operator matrices whose nonzero entries are on the main diagonal and the first subdiagonal.
We refer to this specific structure as a \textbf{strong flag structure}. The main result of this section, as the following Theorem \ref{lemma4} shown, is that the quasi-similarity of operators in $\mathcal{F}_{n}(\mathcal{A})$  can be transformed into the quasi-similarity of operators in $\mathcal{OF}_{n}(\mathcal{A})$ under Condition (A). This conclusion will play a central role in the study of the equivalence problem between quasi-similarity and similarity of high-index Fredholm operators in the next section.
%The following definitions and conclusions generalizes the Definitions 3.2 and 3.6 and Lemma 3.9 in \cite{XJ}.

\begin{defn}\label{definition4.2}
	An operator $T=\big((T_{i,j})\big)_{i,j=1}^{n}\in\mathcal{F}_{n}(\mathcal{A})$ is in the class \boldsymbol{$\mathcal{OF}_{n}(\mathcal{A})$} if $T_{i,j}=0$ for $1\le i<i+2\le j\le n$.
\end{defn}

\begin{defn}\label{definition4.3}
	An operator $T=\big((T_{i,j})\big)_{i,j=1}^{n}\in\mathcal{F}_{n}(\mathcal{A})$ is said to satisfy \textbf{Condition (A)} if $T_{i,j}=\phi_{i,j}(T_{i,i})T_{i,i+1}T_{i+1,i+2}\cdots T_{j-1,j}$, where $\phi_{i,j}(T_{i,i})\in\{T_{i,i}\}'$ for $1\le i<i+2\le j\le n$.
\end{defn}

The main result of this section is as follows.
\begin{thm}\label{lemma4}
	Let $T=\big((T_{i,j})\big)_{i,j=1}^{n},\tilde{T}=\big((\tilde{T}_{i,j})\big)_{i,j=1}^{n}\in\mathcal{F}_{n}(\mathcal{A})$, where $\{T_{i,i}\}'$ (resp., $\{\tilde{T}_{i,i}\}'$) is a semi-simple commutative Banach algebra for $1\le i\le n$ and both $T$ and $\tilde{T}$ satisfy Condition (A). Then $T\sim_{q.s}\tilde{T}$ if and only if $S\sim_{q.s}\tilde{S}$, where $S=\big((S_{i,j})\big)_{i,j=1}^{n},\tilde{S}=\big((\tilde{S}_{i,j})\big)_{i,j=1}^{n}\in\mathcal{OF}_{n}(\mathcal{A})$ and $S_{i,j}=T_{i,j}$, $\tilde{S}_{i,j}=\tilde{T}_{i,j}$ for $i=j$ and $i+1=j$.
\end{thm}

Prior to proving this main theorem, we need to provide some necessary explanations.
One advantage of operator class $\mathcal{F}_{n}(\mathcal{A})$, defined by the language of operator matrices and flag structure, is that every operator in the commutant algebra of an operator in $\mathcal{F}_{n}(\mathcal{A})$ admits a nice property.
More precisely, every operator intertwining an operator in class $\mathcal{F}_{n}(\mathcal{A})$ has an operator matrix in the form of upper-triangular.
Furthermore, when we consider the quasi-similarity topic, we describe that any two quasi-affine intertwiners between two different operators in $\mathcal{F}_{n}(\mathcal{A})$ are also upper-triangular form, which is a fundamental but crucial prerequisite work to prove the main theorem.
%Notably, when $\mathcal{A}=\mathcal{B}_{1}(\Omega)$, the class $\mathcal{F}_{n}(\mathcal{A})$ coincides with the class $\mathcal{FB}_{n}(\Omega)$.
%This operator class has been extensively studied in \cite{JJKM,JJK,JJW,JJM} in relation to topics such as unitary classification, similarity classification, irreducibility and strong irreducibility.
%An essential role is played by the operator's commutant algebra in these topics.
%In fact, we describe that any intertwining operator in the commutant algebra of the operator in $\mathcal{F}_{n}(\mathcal{A})$ is upper-triangular form.
The reader can be referred to the Appendix for the related conclusions.

The following Proposition establishes the bridge between the classes $\mathcal{F}_{n}(\mathcal{A})$ and $\mathcal{OF}_{n}(\mathcal{A})$, which enables to transfer some problems on the class $\mathcal{F}_{n}(\mathcal{A})$ to the corresponding problems on the class $\mathcal{OF}_{n}(\mathcal{A})$.

\begin{prop}\label{lemma4.4}
Let $T=\big((T_{i,j})\big)_{i,j=1}^{n}\in\mathcal{OF}_{n}(\mathcal{A})$ and $S=\big((S_{i,j})\big)_{i,j=1}^{n}\in\mathcal{F}_{n}(\mathcal{A})$,
where $T_{i,j}=S_{i,j}$ for $i=j$ and $i+1=j$, $\{T_{i,i}\}'$ is a semi-simple commutative Banach algebra for $1\le i\le n$, respectively, and $S$ satisfies Condition (A).
Then there exists a bounded operator $K$ such that $X:=I+K$ is invertible and $XT=SX$.
\end{prop}
\begin{proof}
	We first prove the case of $n=3$.  Following from the Proposition \ref{proposition3.7} we can set $X=\left(\begin{smallmatrix}
		I&K_{1,2}&K_{1,3}\\
		0&I&K_{2,3}\\
		0&0&I
	\end{smallmatrix}\right)$, then we consider the following equation:
\begin{equation}
\setlength\abovedisplayskip{3pt}
\setlength\belowdisplayskip{3pt}
	\left(\begin{smallmatrix}
		I&K_{1,2}&K_{1,3}\\
		0&I&K_{2,3}\\
		0&0&I
	\end{smallmatrix}\right)
	\left(\begin{smallmatrix}
	T_{1,1}&T_{1,2}&0\\
	0&T_{2,2}&T_{2,3}\\
	0&0&T_{3,3}
    \end{smallmatrix}\right)
=
\left(\begin{smallmatrix}
	T_{1,1}&T_{1,2}&S_{1,3}\\
	0&T_{2,2}&T_{2,3}\\
	0&0&T_{3,3}
\end{smallmatrix}\right)
	\left(\begin{smallmatrix}
	I&K_{1,2}&K_{1,3}\\
	0&I&K_{2,3}\\
	0&0&I
\end{smallmatrix}\right).
\end{equation}
This implies that  for $i=1,2$,
\begin{equation*}
\setlength\abovedisplayskip{3pt}
\setlength\belowdisplayskip{3pt}
\begin{aligned}
T_{i,i}K_{i,i+1}&=K_{i,i+1}T_{i+1,i+1},\ i=1,2;\\ T_{1,1}K_{1,3}-K_{1,3}T_{3,3}&=K_{1,2}T_{2,3}-T_{1,2}K_{2,3}-S_{1,3}.
\end{aligned}
\end{equation*}
Since $S$ satisfies Condition (A), we have  $S_{1,3}=\phi_{1,3}(T_{1,1})T_{1,2}T_{2,3}$, where $\phi_{1,3}(T_{1,1})\in\{T_{1,1}\}'$.

Let $K_{1,3}=S_{1,3}$ and $K_{2,3}=T_{2,3}$, then we have that $K_{1,3}T_{3,3}=T_{1,1}K_{1,3}$ and $T_{2,2}K_{2,3}=K_{2,3}T_{3,3}$.
Furthermore, we set $K_{1,2}=T_{1,2}+\phi(T_{1,1})T_{1,2}$, which satisfies that $T_{1,1}K_{1,2}=K_{1,2}T_{2,2}$.
Thus the operator $X:=\left(\begin{smallmatrix}
I&T_{1,2}+\phi(T_{1,1})T_{1,2}&T_{1,3}\\
0&I&T_{2,3}\\
0&0&I
\end{smallmatrix}\right)$ is invertible and satisfies that $XT=SX$ by a routine calculation.

The proof is by induction for $n$.
Suppose that the conclusion holds for all $k<n$.
To make this more precise, for all $k<n$, we assume that there exists a bounded operator $K=\big((K_{i,j})\big)_{i,j=1}^{k}$, where $K_{i,j}=0$ for $i\ge j$, $K_{i,j}=\psi_{i,j}(T_{i,i})T_{i,i+1}\cdots T_{j-1,j}$ for $1\le i<j<k$ and $K_{i,k}=S_{i,k}$, such that $X:=I+K$ is invertible and $XT=SX$.

Next we consider the general case of $n$.
Let us write the two operators $T,S$ in the form of $2\times 2$ block matrices:
\begin{equation*}
	\setlength\abovedisplayskip{3pt}
	\setlength\belowdisplayskip{3pt}
	T=\left(\begin{smallmatrix}T_{1,1}&T_{1\times(n-1)}\\0&T_{n-1}\end{smallmatrix}\right),\
	S=\left(\begin{smallmatrix}T_{1,1}&S_{1\times(n-1)}\\0&S_{n-1}\end{smallmatrix}\right),
\end{equation*}
where $T_{n-1}=\big((T_{i,j})\big)_{i,j=2}^{n}\in\mathcal{OF}_{n-1}(\mathcal{A})$, $T_{1\times(n-1)}=(T_{1,2},0,\cdots,0)$,  $S_{n-1}=\big((S_{i,j})\big)_{i,j=2}^{n}\in\mathcal{F}_{n-1}(\mathcal{A})$ and $S_{1\times(n-1)}=(T_{1,2},S_{1,3},\cdots,S_{1,n})$.
Correspondingly, $X=I+K$ can be written as
$X=\left(\begin{smallmatrix}I&X_{1\times(n-1)}\\0&X_{n-1}\end{smallmatrix}\right)$,
where $X_{n-1}=I+K_{n-1}$ and $X_{1\times(n-1)}=(K_{1,2},K_{1,3},\cdots,K_{1,n})$.
From the relation $XT=SX$, we get that
\begin{equation*}
	\setlength\abovedisplayskip{3pt}
	\setlength\belowdisplayskip{3pt}
    \left(\begin{smallmatrix}I&X_{1\times(n-1)}\\0&X_{n-1}\end{smallmatrix}\right)
	\left(\begin{smallmatrix}T_{1,1}&T_{1\times(n-1)}\\0&T_{n-1}\end{smallmatrix}\right),\
	=\left(\begin{smallmatrix}T_{1,1}&S_{1\times(n-1)}\\0&S_{n-1}\end{smallmatrix}\right)
	\left(\begin{smallmatrix}I&X_{1\times(n-1)}\\0&X_{n-1}\end{smallmatrix}\right),
\end{equation*}
which shows that $$T_{1\times(n-1)}+X_{1\times(n-1)}T_{n-1}=T_{1,1}X_{1\times(n-1)}+S_{1\times(n-1)}X_{n-1}.$$
That is to say that we have, for $2\le i\le n$,
\begin{equation}\label{equation3.5}
	\setlength\abovedisplayskip{3pt}
	\setlength\belowdisplayskip{3pt}
	K_{1,i-1}T_{i-1,i}+K_{1,i}T_{i,i}=T_{1,1}K_{1,i}+T_{1,2}K_{2,i}+S_{1,3}K_{3,i}+
	+\cdots+S_{1,i-1}K_{i-1,i}+S_{1,i}.
\end{equation}
Let $K_{1,n}=S_{1,n}$, then we can know that $K_{1,n}T_{n,n}=T_{1,1}K_{1,n}$. Thus together the equation obtained by setting $i=n$ in Equation (\ref{equation3.5}) with the hypothesis, we get that
\begin{equation*}
	\setlength\abovedisplayskip{3pt}
	\setlength\belowdisplayskip{3pt}
	\begin{aligned}
    K_{1,n-1}&=T_{1,2}\phi_{2,n}(T_{2,2})T_{2,3}\cdots T_{n-2,n-1}+\cdots+\phi_{1,n}(T_{1,1})T_{1,2}\cdots T_{n-2,n-1}\\
    &\triangleq\psi_{1,n-1}(T_{1,1})T_{1,2}\cdots T_{n-2,n-1},
    \end{aligned}
\end{equation*}
which satisfies that $K_{1,n-1}T_{n-1,n-1}=T_{1,1}K_{1,n-1}$. Moreover, together the equation obtained by setting $i=n-1$ with the $K_{1,n-1}$, we know that
\begin{equation*}
	\setlength\abovedisplayskip{3pt}
	\setlength\belowdisplayskip{3pt}
	\begin{aligned}
    K_{1,n-2}&=T_{1,2}\psi_{2,n-1}(T_{2,2})T_{2,3}\cdots T_{n-3,n-2}+\cdots+\phi_{1,n-1}(T_{1,1})T_{1,2}\cdots T_{n-3,n-2}\\
    &\triangleq\psi_{1,n-2}(T_{1,1})T_{1,2}\cdots T_{n-3,n-2},
    \end{aligned}
\end{equation*}
which satisfies that $K_{1,n-2}T_{n-2,n-2}=T_{1,1}K_{1,n-2}$. By repeating the previous step, suppose that we have solved $K_{1,i}$ satisfying $K_{1,m}T_{m,m}=T_{m,m}K_{1,m}$ for $3\le m\le n$. Now together the equation obtained by setting $i=m$ in Equation (\ref{equation3.5}) with the $K_{1,m}$, we can derive that
\begin{equation*}
 	\setlength\abovedisplayskip{3pt}
 	\setlength\belowdisplayskip{3pt}
 	\begin{aligned}
 	K_{1,m-1}&=T_{1,2}\psi_{2,m}(T_{2,2})T_{2,3}\cdots T_{m-2,m-1}+\cdots+\phi_{1,m}(T_{1,1})T_{1,2}\cdots T_{m-2,m-1}\\
 	&\triangleq\psi_{1,m-1}(T_{1,1})T_{1,2}\cdots T_{m-2,m-1}.
 	\end{aligned}
\end{equation*}

In summary, we find a bounded operator $K=\big((K_{i,j})\big)_{i,j=1}^{n}$, where $K_{i,j}=0$ for $i\ge j$, $K_{i,j}=\psi_{i,j}(T_{i,i})T_{i,i+1}\cdots T_{j-1,j}$ for $j\ne n$ and $K_{i,n}=S_{i,n}$ for $1\le i\le n-1$, such that $X:=I+K$ is invertible and satisfies that $XT=SX$. This completes the proof.
\end{proof}

With the crucial bridge of Proposition \ref{lemma4.4}, we can now give a proof of the main theorem:

\textbf{Proof of the Theorem \ref{lemma4}:}
From the Proposition \ref{lemma4.4}, we know that there exist two invertible operators $X$ and $Y$ such that $XT=SX$ and $Y\tilde{T}=\tilde{S}Y$.

If $T\sim_{q.s}\tilde{T}$, then there exist two quasi-affine intertwiners $W$ and $Z$ such that $TW=W\tilde{T}$ and $ZT=\tilde{T}Z$. Together with $T\sim_{s}S$ and $\tilde{T}\sim_{s}\tilde{S}$, we get that
\begin{align*}
     XWY^{-1}\tilde{S}&=XW\tilde{T}Y^{-1}=XTWY^{-1}=SXWY^{-1},\\
     \tilde{S}YZX^{-1}&=Y\tilde{T}ZX^{-1}=YZTX^{-1}=YZX^{-1}S.
\end{align*}
Since both $X,Y$ are invertible and both $W,Z$ are quasi-affine, then both $XWY^{-1},YZX^{-1}$ are quasi-affine. Thus $S\sim_{q.s}\tilde{S}$.

Conversely, if  $S\sim_{q.s}\tilde{S}$, then there exist two quasi-affine intertwiners $W'$ and $Z'$ such that $SW'=W'\tilde{S}$ and $Z'S=\tilde{S}Z'$. Together with $T\sim_{s}S$ and $\tilde{T}\sim_{s}\tilde{S}$, we obtain that
\begin{align*}
	X^{-1}W'Y\tilde{T}&=X^{-1}W'\tilde{S}Y=X^{-1}SW'Y=TX^{-1}W'Y,\\
	\tilde{T}Y^{-1}Z'X&=Y^{-1}\tilde{S}Z'X=Y^{-1}Z'SX=Y^{-1}Z'XT.
\end{align*}
Since both $X,Y$ are invertible and both $W',Z'$ are quasi-affine, then both $X^{-1}W'Y$ and $Y^{-1}Z'X$ are quasi-affine. Thus $T\sim_{q.s}\tilde{T}$. This completes the proof.

\section{quasi-similarity for operators in $\mathcal{F}_{n}(\mathcal{A})$}\label{sec4}
In this section, the relationship between the similarity and quasi-similarity classification of the high-index operators, which were introduced in the previous section, will be considered.
When the class $\mathcal{A}$ is the set of ONB shifts (unless otherwise specified, the ONB shifts mentioned in this section are non quasi-nilpotent operators), we demonstrate that any two operators $T,\tilde{T}\in\mathcal{F}_{n}(\mathcal{A})$ with certain conditions are quasi-similar if and only if they are similar, which provides a partial answer to the D.A. Herrero's Question \ref{question1.2}.
Furthermore, as an application, the quasi-similarity and similarity of multiplication operators on some vector-valued reproducing kernel Hilbert spaces are characterized.
This characterization extends Proposition 2.6 of M. Uchiyama in \cite{Uchiyama} to the high-index cases.

The following is the main theorem of this section:

\begin{thm}\label{theorem4.1}
Let $T=\big((T_{i,j})\big)_{i,j=1}^{n},\tilde{T}=\big((\tilde{T}_{i,j})\big)_{i,j=1}^{n}\in\mathcal{F}_{n}(\mathcal{A})$ satisfy Condition (A), where
\begin{enumerate}
\item $T_{i,i}\in\mathcal{B}(\mathcal{H}_{i})$ (resp., $\tilde{T}_{i,i}\in\mathcal{B}(\tilde{\mathcal{H}}_{i})$) is backward weighted shift operator with nonzero weight sequence $\{w_{k}^{(i)}\}_{k\ge0}$ (resp., $\{\tilde{w}_{k}^{(i)}\}_{k\ge0}$) on the orthonormal basis $\{e^{(i)}_{k}\}_{k\ge0}$ of $\mathcal{H}_{i}$ (resp., $\{\tilde{e}^{(i)}_{k}\}_{k\ge0}$ of $\tilde{\mathcal{H}}_{i}$) for $1\leq i\leq n$;
\item For $1\le i\le n-1$, $T_{i,i+1}(e^{(i+1)}_{0})=e^{(i)}_{0}$ (resp., $\tilde{T}_{i,i+1}(\tilde{e}^{(i+1)}_{0})=\tilde{e}^{(i)}_{0}$) and for $k\ge1$, $$T_{i,i+1}(e^{(i+1)}_{k})=\frac{\prod\limits_{m=0}^{k-1}w^{(i+1)}_{m}}{\prod\limits_{m=0}^{k-1}w^{(i)}_{m}}e^{(i)}_{k},\ (\mbox{resp.,}\ \tilde{T}_{i,i+1}(\tilde{e}^{(i+1)}_{k})=\frac{\prod\limits_{m=0}^{k-1}\tilde{w}^{(i+1)}_{m}}{\prod\limits_{m=0}^{k-1}\tilde{w}^{(i)}_{m}}\tilde{e}^{(i)}_{k}).$$
\end{enumerate}
Then $T\sim_{q.s}\tilde{T}$ is equivalent to $T\sim_{s}\tilde{T}$.
\end{thm}

To prove Theorem \ref{theorem4.1}, from Theorem \ref{lemma4} and Proposition \ref{lemma4.4},
%we can know that, in the context of Condition (A), the similarity and the quasi-similarity of the operators in $\mathcal{F}_{n}(\mathcal{A})$ can be fully converted to the similarity and the quasi-similarity of the operators in $\mathcal{OF}_{n}(\mathcal{A})$, respectively. Thus
it suffices to consider the case where $T$ and $\tilde{T}$ given in Theorem \ref{theorem4.1} belong to the class $\mathcal{OF}_{n}(\mathcal{A})$. Next, we will use mathematical induction to prove it.

%Although the operators in $\mathcal{OF}_{n}(\mathcal{A})$ have a clear and simple structure, when considering $\mathcal{A}$ as the class of Cowen-Douglas operators with index 1, they are norm-dense up to similarity.

\begin{lem}\label{lemma4.6}
Let $T=\left(\begin{smallmatrix}T_{1,1}&T_{1,2}\\0&T_{2,2}\end{smallmatrix}\right), \tilde{T}=\left(\begin{smallmatrix}\tilde{T}_{1,1}&\tilde{T}_{1,2}\\0&\tilde{T}_{2,2}\end{smallmatrix}\right)\in\mathcal{F}_{2}(\mathcal{A})$, where $T_{i,i}\in\mathcal{B}(\mathcal{H}_{i})$ (resp., $\tilde{T}_{i,i}\in\mathcal{B}(\mathcal{\tilde{H}}_{i})$) is backward weighted shift operator with nonzero weight sequence $\{w_{k}^{(i)}\}_{k\ge0}$ (resp., $\{\tilde{w}_{k}^{(i)}\}_{k\ge0}$) on the orthonormal basis $\{e^{(i)}_{k}\}_{k\ge0}$ of $\mathcal{H}_{i}$ (resp., $\{\tilde{e}^{(i)}_{k}\}_{k\ge0}$ of $\tilde{\mathcal{H}}_{i}$) for $i=1,2$.
Suppose that
\begin{equation*}
\setlength\abovedisplayskip{3pt}
\setlength\belowdisplayskip{3pt}
T_{1,2}(e^{(2)}_{0})=e^{(1)}_{0},\ \tilde{T}_{1,2}(\tilde{e}^{(2)}_{0})=\tilde{e}^{(1)}_{0}\ \mbox{and}\ T_{1,2}(e^{(2)}_{k})=\frac{\prod\limits_{m=0}^{k-1}w^{(2)}_{m}}{\prod\limits_{m=0}^{k-1}w^{(1)}_{m}}e^{(1)}_{k},\ \tilde{T}_{1,2}(\tilde{e}^{(2)}_{k})=\frac{\prod\limits_{m=0}^{k-1}\tilde{w}^{(2)}_{m}}{\prod\limits_{m=0}^{k-1}\tilde{w}^{(1)}_{m}}\tilde{e}^{(1)}_{k}
\end{equation*}
for $k\ge1$. Then $T\sim_{q.s}\tilde{T}$ is equivalent to $T\sim_{s}\tilde{T}$.
\end{lem}
\begin{proof}
It is obvious that $T\sim_{s}\tilde{T}$ implies that $T\sim_{q.s}\tilde{T}$. So next we only need to prove the other direction. Assume that $T\sim_{q.s}\tilde{T}$, i.e. there exist two quasi-affine intertwining operators $X$ and $Y$ such that $TX=X\tilde{T}$ and $YT=\tilde{T}Y$.
Then, by Proposition \ref{proposition3.7}, $X$ and $Y$ are in the form of upper-triangular
$X=\left(\begin{smallmatrix}X_{1,1}&X_{1,2}\\0&X_{2,2}\end{smallmatrix}\right)$ and $Y=\left(\begin{smallmatrix}Y_{1,1}&Y_{1,2}\\0&Y_{2,2}\end{smallmatrix}\right)$ such that
\begin{equation*}
\setlength\abovedisplayskip{3pt}
\setlength\belowdisplayskip{3pt}
\left(\begin{smallmatrix}T_{1,1}&T_{1,2}\\0&T_{2,2}\end{smallmatrix}\right)
\left(\begin{smallmatrix}X_{1,1}&X_{1,2}\\0&X_{2,2}\end{smallmatrix}\right)
=\left(\begin{smallmatrix}X_{1,1}&X_{1,2}\\0&X_{2,2}\end{smallmatrix}\right)
\left(\begin{smallmatrix}\tilde{T}_{1,1}&\tilde{T}_{1,2}\\0&\tilde{T}_{2,2}\end{smallmatrix}\right)
\end{equation*}
and
\begin{equation*}
\setlength\abovedisplayskip{3pt}
\setlength\belowdisplayskip{3pt}
\left(\begin{smallmatrix}Y_{1,1}&Y_{1,2}\\0&Y_{2,2}\end{smallmatrix}\right)
\left(\begin{smallmatrix}T_{1,1}&T_{1,2}\\0&T_{2,2}\end{smallmatrix}\right)
=\left(\begin{smallmatrix}\tilde{T}_{1,1}&\tilde{T}_{1,2}\\0&\tilde{T}_{2,2}\end{smallmatrix}\right)
\left(\begin{smallmatrix}Y_{1,1}&Y_{1,2}\\0&Y_{2,2}\end{smallmatrix}\right).
\end{equation*}
By calculating the left and right sides of the above equation using matrix multiplication, we can obtain that $T_{1,1}X_{1,1}=X_{1,1}\tilde{T}_{1,1}$ and $Y_{1,1}T_{1,1}=\tilde{T}_{1,1}Y_{1,1}$.
For one thing, the injectivity of $X$ and $Y$ implies that $X_{1,1}$ and $Y_{1,1}$ are also injective.
For another, it follows from Proposition \ref{proposition3.2} that the ranges of $X_{1,1}$ and $Y_{1,1}$ are both dense. Consequently, $X_{1,1}$ and $Y_{1,1}$ are quasi-affine and satisfy $T_{1,1}\sim_{q.s}\tilde{T}_{1,1}$, which, according to Lemma \ref{lemma2.5}, is equivalent to $T_{1,1}\sim_{s}\tilde{T}_{1,1}$.

From Lemma \ref{lemma4.5}, we know that $X_{k,k}=\big((x^{(k)}_{i,j})\big)_{i,j\geq0}$ and $Y_{k,k}=\big((y^{k}_{i,j})\big)_{i,j\geq0}$ are in the form of upper-triangular with respect to $\{e^{(k)}_{l}\}_{l\ge0}$, $\{\tilde{e}^{(k)}_{l}\}_{l\ge0}$, respectively, where
\begin{equation*}
\setlength\abovedisplayskip{3pt}
\setlength\belowdisplayskip{3pt}
x^{(k)}_{l+1,l+1}=\frac{\prod\limits_{i=0}^{l}\tilde{w}^{(k)}_{i}}{\prod\limits_{i=0}^{l}w^{(k)}_{i}}x^{(k)}_{0,0},\  x^{(k)}_{l+1,l+j}=\frac{\prod\limits_{i=j-1}^{l+j-1}\tilde{w}^{(k)}_{i}}{\prod\limits_{i=0}^{l}w^{(k)}_{i}}x^{(k)}_{0,j-1},\
j\ge2,\ l\ge0,\ k=1,2
\end{equation*}
and
\begin{equation*}
\setlength\abovedisplayskip{3pt}
\setlength\belowdisplayskip{3pt}
y^{(k)}_{l+1,l+1}=\frac{\prod\limits_{i=0}^{l}w^{(k)}_{i}}{\prod\limits_{i=0}^{l}\tilde{w}^{(k)}_{i}}y^{(k)}_{0,0},\
y^{(k)}_{l+1,l+j}=\frac{\prod\limits_{i=j-1}^{l+j-1}w^{(k)}_{i}}{\prod\limits_{i=0}^{l}\tilde{w}^{(k)}_{i}}y^{(k)}_{0,j-1},\
j\ge2,\ l\ge0,\ k=1,2.
\end{equation*}
Similarly, we can assume that $X_{1,2}(\tilde{e}^{(2)}_{l})=\sum\limits_{k=0}^{\infty}x^{(1,2)}_{k,l}e^{(1)}_{k}$ and $Y_{1,2}(e^{(2)}_{l})=\sum\limits_{k=0}^{\infty}y^{(1,2)}_{k,l}\tilde{e}^{(1)}_{k}$ for $l\ge0$. It's worthy to note that the condition that both $X_{1,1}$ and $Y_{1,1}$ are injective implies that $x^{(1)}_{0,0}\ne0$ and $y^{(1)}_{0,0}\ne0$.

Next we prove by contraction that $x^{(2)}_{0,0}\ne0$ and $y^{(2)}_{0,0}\ne0$. Suppose that $x^{(2)}_{0,0}=0$ and $y^{(2)}_{0,0}=0$, which means that $X_{2,2}(\tilde{e}^{(2)}_{0})=0$ and $Y_{2,2}(e^{(2)}_{0})=0$.
We then act the left and right sides of the following two equations on the vectors $\tilde{e}^{(2)}_{0}$ and $e^{(2)}_{0}$, respectively,
\begin{equation*}
\setlength\abovedisplayskip{3pt}
\setlength\belowdisplayskip{3pt}
T_{1,1}X_{1,2}-X_{1,2}\tilde{T}_{2,2}=X_{1,1}\tilde{T}_{1,2}-T_{1,2}X_{2,2}\ \mbox{and}\ Y_{1,2}T_{2,2}-\tilde{T}_{1,1}Y_{1,2}=\tilde{T}_{1,2}Y_{2,2}-Y_{1,1}T_{1,2}.
\end{equation*}
Through routine calculations and by comparing the coefficients, we find that
\begin{equation*}
\setlength\abovedisplayskip{3pt}
\setlength\belowdisplayskip{3pt}
\sum\limits_{i=0}^{\infty}x^{(1,2)}_{i+1,0}w^{(1)}_{i}e^{(1)}_{i}=x^{(1)}_{0,0}e^{(1)}_{0}\ \mbox{and}\ \sum\limits_{i=0}^{\infty}y^{(1,2)}_{i+1,0}\tilde{w}^{(1)}_{i}\tilde{e}^{(1)}_{i}=y^{(1)}_{0,0}\tilde{e}^{(1)}_{0}.
\end{equation*}
Thus, we can see that $x^{(1,2)}_{1,0}=\frac{x^{(1)}_{0,0}}{w^{(1)}_{0}}\ne0$, $x^{(1,2)}_{i,0}=0$, $i\ge2$, and
$y^{(1,2)}_{1,0}=\frac{y^{(1)}_{0,0}}{\tilde{w}^{(1)}_{0}}\ne0$, $y^{(1,2)}_{i,0}=0$, $i\ge2$.
The above relations show that $X_{1,2}(\tilde{e}^{(2)}_{0})=x^{(1,2)}_{0,0}e^{(1)}_{0}+x^{(1,2)}_{1,0}e^{(1)}_{1}\ne0$ and $Y_{1,2}(e^{(2)}_{0})=y^{(1,2)}_{0,0}\tilde{e}^{(1)}_{0}+y^{(1,2)}_{1,0}\tilde{e}^{(1)}_{1}\ne0$. Furthermore, we obtain
\begin{align*}
X_{1,1}\Big(\frac{x^{(1,2)}_{0,0}}{x^{(1)}_{0,0}}\tilde{e}^{(1)}_{0}-\frac{x^{(1)}_{0,1}}{\tilde{w}^{(1)}_{0}x^{(1)}_{0,0}}\tilde{e}^{(1)}_{0}+\frac{1}{\tilde{w}^{(1)}_{0}}\tilde{e}^{(1)}_{1}\Big)
&=x^{(1,2)}_{0,0}e^{(1)}_{0}-\frac{x^{(1)}_{0,1}}{\tilde{w}^{(1)}_{0}}e^{(1)}_{0}+\frac{1}{\tilde{w}^{(1)}_{0}}(x^{(1)}_{0,1}e^{(1)}_{0}+x^{(1)}_{1,1}e^{(1)}_{1})\\
&=x^{(1,2)}_{0,0}e^{(1)}_{0}+\frac{x^{(1)}_{1,1}}{\tilde{w}^{(1)}_{0}}e^{(1)}_{1}\\
&=x^{(1,2)}_{0,0}e^{(1)}_{0}+\frac{x^{(1)}_{0,0}}{w^{(1)}_{0}}e^{(1)}_{1}\\
&=X_{1,2}(\tilde{e}^{(2)}_{0}),
\end{align*}
and
\begin{align*}
Y_{1,1}\Big(\frac{y^{(1,2)}_{0,0}}{y^{(1)}_{0,0}}e^{(1)}_{0}-\frac{y^{(1)}_{0,1}}{w^{(1)}_{0}y^{(1)}_{0,0}}e^{(1)}_{0}+\frac{1}{w^{(1)}_{0}}e^{(1)}_{1}\Big)
&=y^{(1,2)}_{0,0}\tilde{e}^{(1)}_{0}-\frac{y^{(1)}_{0,1}}{w^{(1)}_{0}}\tilde{e}^{(1)}_{0}+\frac{1}{w^{(1)}_{0}}(y^{(1)}_{0,1}\tilde{e}^{(1)}_{0}+y^{(1)}_{1,1}\tilde{e}^{(1)}_{1})\\
&=y^{(1,2)}_{0,0}\tilde{e}^{(1)}_{0}+\frac{y^{(1)}_{1,1}}{w^{(1)}_{0}}\tilde{e}^{(1)}_{1}\\
&=y^{(1,2)}_{0,0}\tilde{e}^{(1)}_{0}+\frac{y^{(1)}_{0,0}}{\tilde{w}^{(1)}_{0}}\tilde{e}^{(1)}_{1}\\
&=Y_{1,2}(e^{(2)}_{0}),
\end{align*}
which imply that $\mbox{ran}\,X_{1,1}\cap\mbox{ran}\,X_{1,2}|_{\ker X_{2,2}}\ne\{0\}\ \mbox{and}\ \mbox{ran}\,Y_{1,1}\cap\mbox{ran}\,Y_{1,2}|_{\ker Y_{2,2}}\ne\{0\}$.
This contradicts the fact that both $X$ and $Y$ are injective. Hence, $x^{(2)}_{0,0}\ne0$ and $y^{(2)}_{0,0}\ne0$.
Subsequently, we have that $0<\frac{\vert y^{(2)}_{0,0}\vert}{\Vert Y_{2,2}\Vert}\le\Big|\frac{\prod\limits_{i=0}^{l}\tilde{w}^{(2)}_{i}}{\prod\limits_{i=0}^{l}w^{(2)}_{i}}\Big|\le\frac{\Vert X_{2,2}\Vert}{\vert x^{(2)}_{0,0}\vert}<\infty$ for all $l\ge0$. By A.L. Shields's result in \cite{Shields}, it means that $T_{2,2}\sim_{s}\tilde{T}_{2,2}$.

Define the operator $\tilde{X}_{i,i}:\mathcal{\tilde{H}}_{i}\rightarrow\mathcal{H}_{i}$, $i=1,2$, by $\tilde{X}_{i,i}(\tilde{e}^{(i)}_{0})=e^{(i)}_{0}$ and for $l\ge1$, $\tilde{X}_{i,i}(\tilde{e}^{(i)}_{l})=\frac{\prod\limits_{m=0}^{l-1}\tilde{w}^{(i)}_{m}}{\prod\limits_{m=0}^{l-1}w^{(i)}_{m}}e^{(i)}_{l}$.
For $i=1,2$, the relationship $T_{i,i}\sim_{s}\tilde{T}_{i,i}$ implies that the sequence $\{1,\frac{\tilde{w}^{(i)}_{0}}{w^{(i)}_{0}},\cdots,\frac{\prod\limits_{m=0}^{l-1}\tilde{w}^{(i)}_{m}}{\prod\limits_{m=0}^{l-1}w^{(i)}_{m}},\cdots\}$
is bounded and bounded low, which concludes that the operator $\tilde{X}_{i,i}$ is invertible. It's obvious that
\begin{equation*}
\setlength\abovedisplayskip{3pt}
\setlength\belowdisplayskip{3pt}
T_{1,2}\tilde{X}_{2,2}(\tilde{e}^{(2)}_{0})=e^{(1)}_{0}=\tilde{X}_{1,1}\tilde{T}_{1,2}(\tilde{e}^{(2)}_{0})\ \mbox{and}\
T_{k,k}\tilde{X}_{k,k}(\tilde{e}^{(k)}_{0})=0=\tilde{X}_{k,k}\tilde{T}_{k,k}(\tilde{e}^{(k)}_{0}),\ k=1,2.
\end{equation*}
Moreover, for all $l\ge1$, we have
\begin{small}
\begin{equation*}
\setlength\abovedisplayskip{3pt}
\setlength\belowdisplayskip{3pt}
T_{1,2}\tilde{X}_{2,2}(\tilde{e}^{(2)}_{l})
=T_{1,2}\Big(\frac{\prod\limits_{i=0}^{l-1}\tilde{w}^{(2)}_{i}}{\prod\limits_{i=0}^{l-1}w^{(2)}_{i}}e^{(2)}_{l}\Big)
=\frac{\prod\limits_{i=0}^{l-1}w^{(2)}_{i}}{\prod\limits_{i=0}^{l-1}w^{(1)}_{i}}\frac{\prod\limits_{i=0}^{l-1}\tilde{w}^{(2)}_{i}}{\prod\limits_{i=0}^{l-1}w^{(2)}_{i}}e^{(1)}_{l}
=\tilde{X}_{1,1}\Big(\frac{\prod\limits_{i=0}^{l-1}\tilde{w}^{(2)}_{i}}{\prod\limits_{i=0}^{l-1}\tilde{w}^{(1)}_{i}}\tilde{e}^{(1)}_{l}\Big)
=\tilde{X}_{1,1}\tilde{T}_{1,2}(\tilde{e}^{(2)}_{l})
\end{equation*}
\end{small}
and for $k=1,2$,
\begin{small}
\begin{equation*}
\setlength\abovedisplayskip{3pt}
\setlength\belowdisplayskip{3pt}
T_{k,k}\tilde{X}_{k,k}(\tilde{e}^{(k)}_{l})
=T_{k,k}\Big(\frac{\prod\limits_{i=0}^{l-1}\tilde{w}^{(k)}_{i}}{\prod\limits_{i=0}^{l-1}w^{(k)}_{i}}e^{(k)}_{l}\Big)
=\frac{\prod\limits_{i=0}^{l-1}\tilde{w}^{(k)}_{i}}{\prod\limits_{i=0}^{l-1}w^{(k)}_{i}}w^{(k)}_{l-1}e^{(k)}_{l-1}
=\tilde{X}_{k,k}(\tilde{w}^{(k)}_{l-1}e^{(1)}_{l-1})
=\tilde{X}_{k,k}\tilde{T}_{k,k}(\tilde{e}^{(k)}_{l}).
\end{equation*}
\end{small}
Therefore we get that $T_{k,k}\tilde{X}_{k,k}=\tilde{X}_{k,k}\tilde{T}_{k,k}$, $k=1,2$, and $T_{1,2}\tilde{X}_{2,2}=\tilde{X}_{1,1}\tilde{T}_{1,2}$. That is to say that the invertible operator $\tilde{X}:=\left(\begin{smallmatrix}\tilde{X}_{1,1}&0\\0&\tilde{X}_{2,2}\end{smallmatrix}\right)$ satisfies $T\tilde{X}=\tilde{X}\tilde{T}$, i.e., $T\sim_{s}\tilde{T}$.
\end{proof}

\begin{prop}\label{proposition4.8}
Let $T=\big((T_{i,j})\big)_{i,j=1}^{n}, \tilde{T}=\big((\tilde{T}_{i,j})\big)_{i,j=1}^{n}\in\mathcal{OF}_{n}(\mathcal{A})$, where
\begin{enumerate}
\item $T_{i,i}\in\mathcal{B}(\mathcal{H}_{i})$ (resp., $\tilde{T}_{i,i}\in\mathcal{B}(\tilde{\mathcal{H}}_{i})$) is backward weighted shift operator with nonzero weight sequence $\{w_{k}^{(i)}\}_{k\ge0}$ (resp., $\{\tilde{w}_{k}^{(i)}\}_{k\ge0}$) on the orthonormal basis $\{e^{(i)}_{k}\}_{k\ge0}$ of $\mathcal{H}_{i}$ (resp., $\{\tilde{e}^{(i)}_{k}\}_{k\ge0}$ of $\tilde{\mathcal{H}}_{i}$) for $1\leq i\leq n$;
\item For $1\le i\le n-1$, $T_{i,i+1}(e^{(i+1)}_{0})=e^{(i)}_{0}$ (resp., $\tilde{T}_{i,i+1}(\tilde{e}^{(i+1)}_{0})=\tilde{e}^{(i)}_{0}$) and for $k\ge1$, $$T_{i,i+1}(e^{(i+1)}_{k})=\frac{\prod\limits_{m=0}^{k-1}w^{(i+1)}_{m}}{\prod\limits_{m=0}^{k-1}w^{(i)}_{m}}e^{(i)}_{k},\ (\mbox{resp.,}\ \tilde{T}_{i,i+1}(\tilde{e}^{(i+1)}_{k})=\frac{\prod\limits_{m=0}^{k-1}\tilde{w}^{(i+1)}_{m}}{\prod\limits_{m=0}^{k-1}\tilde{w}^{(i)}_{m}}\tilde{e}^{(i)}_{k}).$$
\end{enumerate}
Then $T\sim_{q.s}\tilde{T}$ is equivalent to $T\sim_{s}\tilde{T}$.
\end{prop}
\begin{proof}
It's clear that $T\sim_{s}\tilde{T}$ implies that $T\sim_{q.s}\tilde{T}$. So next we only need to show that if there exist two quasi-affine intertwining operators $X=\big((X_{i,j})\big)_{i,j=1}^{n}$ and $Y=\big((Y_{i,j})\big)_{i,j=1}^{n}$, such that $TX=X\tilde{T}$ and $YT=\tilde{T}Y$, then $T\sim_{s}\tilde{T}$.
By Proposition \ref{proposition3.7}, we know that $X$ and $Y$ are upper-triangular.

From Lemma \ref{lemma4.5}, we have that $X_{k,k}=\big((x^{(k)}_{i,j})\big)_{i,j\geq0}$, $1\le k\le n$, where $x^{(k)}_{i,j}=0$ for $1\le j<i\le n$ and
\begin{equation*}
\setlength\abovedisplayskip{3pt}
\setlength\belowdisplayskip{3pt}
x^{(k)}_{l+1,l+1}=\frac{\prod\limits_{i=0}^{l}\tilde{w}^{(k)}_{i}}{\prod\limits_{i=0}^{l}w^{(k)}_{i}}x_{0,0}^{(k)},\
x^{(k)}_{l+1,l+j}=\frac{\prod\limits_{i=j-1}^{l+j-1}\tilde{w}^{(k)}_{i}}{\prod\limits_{i=0}^{l}w^{(k)}_{i}}x^{(k)}_{0,j-1},\ j\ge2,\ l\ge0.
\end{equation*}
Similarly, we can assume that $X_{i,j}(\tilde{e}^{(j)}_{l})=\sum\limits_{k=0}^{\infty}x^{(i,j)}_{k,l}e^{(i)}_{k}$ and $Y_{i,j}(e^{(j)}_{l})=\sum\limits_{k=0}^{\infty}y^{(i,j)}_{k,l}\tilde{e}^{(i)}_{k}$ for $l\ge0$ and $1\le i<i+1\le j\le n$. Since $X$ is injective, so is $X_{1,1}$, and thus $x^{(1)}_{0,0}\ne0$. Furthermore, we will use mathematical induction to prove the following claim for $n$:
\begin{claim}
$x^{(k)}_{0,0}\ne0$ for all $1\le k\le n$.
\end{claim}
The validity of the case $n=2$, is immediate from the proof of Lemma \ref{lemma4.6}. Suppose that $x^{(k)}_{0,0}\ne0$ for all $1\le k\le m$ holds for all $m<n$. Then for the case of $n$, the induction hypothesis guarantees that $x^{(k)}_{0,0}\ne0$ for $1\le k\le n-1$. Next we will show that $x^{(n)}_{0,0}\ne0$ by contradiction.
If $x^{(n)}_{0,0}=0$, the following fact would hold:
\begin{fact}
$X_{i,j}(\tilde{e}^{(j)}_{l})=\sum\limits_{k=0}^{j-i+l}x^{(i,j)}_{k,l}e^{(i)}_{k}$ for all $l\ge0$ and $1\le i<j\le n$.
\end{fact}
Specific certifications are as follows. Firstly, we calculate the coefficients of the following relations acting on the vector $\tilde{e}^{(i+1)}_{0}$:
\begin{equation}\label{equation4.1}
T_{i,i}X_{i,i+1}-X_{i,i+1}\tilde{T}_{i+1,i+1}=X_{i,i}\tilde{T}_{i,i+1}-T_{i,i+1}X_{i+1,i+1},\ 1\le i\le n-1.
\end{equation}
We have that
\begin{equation*}
\setlength\abovedisplayskip{3pt}
\setlength\belowdisplayskip{3pt}
\sum\limits_{k=0}^{\infty}x^{(i,i+1)}_{k+1,0}w^{(i)}_{k}e^{(i)}_{k}=\left\{
\begin{aligned}
&(x^{(i)}_{0,0}-x^{(i+1)}_{0,0})e^{(i)}_{0},\ 1\le i\le n-2;\\
&x^{(n-1)}_{0,0}e^{(n-1)}_{0},\ i=n-1.
\end{aligned}
\right.
\end{equation*}
%If $x^{(i)}_{0,0}=x^{(i+1)}_{0,0}$ for $1\le i\le n-2$, then $X_{i,i+1}(\tilde{e}^{(i+1)}_{0})=0$.
Therefore, $x^{(i,i+1)}_{k,0}=0$ for all $k\ge2$ and $1\le i\le n-1$ and $x^{(n-1,n)}_{1,0}\ne0$. Then we consider the coefficients of the relations (\ref{equation4.1}) acting on the vector $\tilde{e}^{(i+1)}_{1}$:
\begin{equation*}
\setlength\abovedisplayskip{3pt}
\setlength\belowdisplayskip{3pt}
\begin{aligned}
\sum\limits_{k=0}^{\infty}\big(x^{(i,i+1)}_{k+1,1}w^{(i)}_{k}-x^{(i,i+1)}_{k,0}\tilde{w}^{(i+1)}_{0}\big)e^{(i)}_{k}
&=\big(x^{(i)}_{0,1}\frac{\tilde{w}^{(i+1)}_{0}}{\tilde{w}^{(i)}_{0}}-x^{(i+1)}_{0,1}\big)e^{(i)}_{0}\\
&+\big(x^{(i)}_{1,1}\frac{\tilde{w}^{(i+1)}_{0}}{\tilde{w}^{(i)}_{0}}-x^{(i+1)}_{1,1}\frac{w^{(i+1)}_{0}}{w^{(i)}_{0}}\big)e^{(i)}_{1}.
\end{aligned}
\end{equation*}
Then $x^{(i,i+1)}_{k+1,1}w^{(i)}_{k}=x^{(i,i+1)}_{k,0}\tilde{w}^{(i+1)}_{0}$ for $k\ge2$. Since $x^{(i,i+1)}_{k,0}=0$ for $k\ge2$, $x^{(i,i+1)}_{k,1}=0$ for $k\ge3$.
Suppose that $x^{(i,i+1)}_{k,l}=0$ for $k\ge l+2$, then we prove that $x^{(i,i+1)}_{k,l+1}=0$ for $k\ge l+3$.
Considering the coefficients of the relation (\ref{equation4.1}) acting on the vector $\tilde{e}^{(i+1)}_{l+1}$, we obtain
\begin{equation*}
\setlength\abovedisplayskip{3pt}
\setlength\belowdisplayskip{3pt}
\begin{aligned}
\sum\limits_{k=0}^{\infty}\big(x^{(i,i+1)}_{k+1,l+1}w^{(i)}_{k}-x^{(i,i+1)}_{k,l}\tilde{w}^{(i+1)}_{l}\big)e^{(i)}_{k}
&=\big(x^{(i)}_{0,l+1}\frac{\prod\limits_{m=0}^{l}\tilde{w}^{(i+1)}_{m}}{\prod\limits_{m=0}^{l}\tilde{w}^{(i)}_{m}}-x^{(i+1)}_{0,l+1}\big)e^{(i)}_{0}\\
&+\sum\limits_{k=1}^{l+1}\big(x^{(i)}_{k,l+1}\frac{\prod\limits_{m=0}^{l}\tilde{w}^{(i+1)}_{m}}{\prod\limits_{m=0}^{l}\tilde{w}^{(i)}_{m}}-x^{(i+1)}_{k,l+1}\frac{\prod\limits_{m=0}^{k-1}w^{(i+1)}_{m}}{\prod\limits_{m=0}^{k-1}w^{(i)}_{m}}\big)e^{(i)}_{k},
\end{aligned}
\end{equation*}
which shows that $x^{(i,i+1)}_{k+1,l+1}w^{(i)}_{k}=x^{(i,i+1)}_{k,l}\tilde{w}^{(i+1)}_{l}$ for $k\ge l+2$. Then by the hypothesis, we obtain that $x^{(i,i+1)}_{k,l+1}=0$ for $k\ge l+3$.
As we have illustrated above, for $1\le i\le n-1$, we have that $X_{i,i+1}(\tilde{e}^{(i+1)}_{l})=\sum\limits_{k=0}^{l+1}x^{(i,i+1)}_{k,l}e^{(i)}_{k}$ for $l\ge0$.

We now assume that $X_{i,i+j}(\tilde{e}^{(i+j)}_{l})=\sum\limits_{k=0}^{j+l}x^{(i,i+j)}_{k,l}e^{(i)}_{k}$ for all $l\ge0$ and $1\le i< i+j\le n-1$, in order to prove that  $X_{i,i+j+1}(\tilde{e}^{(i+j+1)}_{l})=\sum\limits_{k=0}^{j+1+l}x^{(i,i+j+1)}_{k,l}e^{(i)}_{k}$.
We will consider the following relationship acting on the vector $\tilde{e}^{(i+j+1)}_{0}$:
\begin{equation}\label{equation4.3}
\setlength\abovedisplayskip{3pt}
\setlength\belowdisplayskip{3pt}
T_{i,i}X_{i,i+j+1}-X_{i,i+j+1}\tilde{T}_{i+j+1,i+j+1}=X_{i,i+j}\tilde{T}_{i+j,i+j+1}-T_{i,i+1}X_{i+1,i+j+1}.
\end{equation}
It follows that
\begin{equation*}
\setlength\abovedisplayskip{3pt}
\setlength\belowdisplayskip{3pt}
\begin{aligned}
\sum\limits_{k=0}^{\infty}x^{(i,i+j+1)}_{k+1,0}w^{(i)}_{k}e^{(i)}_{k}
&=\big(x^{(i,i+j)}_{0,0}-x^{(i+1,i+j+1)}_{0,0}\big)e^{(i)}_{0}\\
&+\sum\limits_{k=1}^{j}\big(x^{(i,i+j)}_{k,0}-x^{(i+1,i+j+1)}_{k,0}\frac{\prod\limits_{m=0}^{k-1}w^{(i+1)}_{m}}{\prod\limits_{m=0}^{k-1}w^{(i)}_{m}}\big)e^{(i)}_{k},
\end{aligned}
\end{equation*}
which end up with $x^{(i,i+j+1)}_{k,0}=0$ for $k\ge j+2$ and $1\le i<i+j\le n-1$.
Suppose that $x^{(i,i+j+1)}_{k,l}=0$ for $k\ge j+l+1$, and we will show that $x^{(i,i+j+1)}_{k,l+1}=0$ for $k\ge j+l+2$.
The image of equation (\ref{equation4.3}) acting on the vector $\tilde{e}^{(i+j+1)}_{l+1}$ is
\begin{equation*}
\setlength\abovedisplayskip{3pt}
\setlength\belowdisplayskip{3pt}
\begin{aligned}
\sum\limits_{k=0}^{\infty}\big(x^{(i,i+j+1)}_{k+1,l+1}w^{(i)}_{k}-x^{(i,i+j+1)}_{k,l}\tilde{w}^{(i+j+1)}_{l}\big)e^{(i)}_{k}
&=\big(x^{(i,i+j)}_{0,l}-x^{(i+1,i+j+1)}_{0,l}\big)e^{(i)}_{0}\\
&+\sum\limits_{k=1}^{j+l}\big(x^{(i,i+j)}_{k,l}-x^{(i+1,i+j+1)}_{k,l}\frac{\prod\limits_{m=0}^{k-1}w^{(i+1)}_{m}}{\prod\limits_{m=0}^{k-1}w^{(i)}_{m}}\big)e^{(i)}_{k},
\end{aligned}
\end{equation*}
which concludes that $x^{(i,i+j+1)}_{k+1,l+1}w^{(i)}_{k}=x^{(i,i+j+1)}_{k,l}\tilde{w}^{(i+j+1)}_{l}$ for $k\ge j+1+l$. Thus, $x^{(i,i+j+1)}_{k,l+1}=0$ for $k\ge j+l+2$. As we have shown above, for $1\le i< i+j\le n-1$, we obtain that $X_{i,i+j+1}(\tilde{e}^{(i+j+1)}_{l})=\sum\limits_{k=0}^{j+1+l}x^{(i,i+j+1)}_{k,l}e^{(i)}_{k}$ for $l\ge0$. This completes the proof of the Fact.
\begin{comment}
$$
X_{i,i+j+1}=\left(\begin{smallmatrix}
		x^{(i,i+j+1)}_{0,0}&x^{(i,i+j+1)}_{0,1}&\cdots&\cdots&\cdots&x^{(i,i+j+1)}_{0,l-1}&x^{(i,i+j+1)}_{0,l}&\cdots\\
		x^{(i,i+j+1)}_{1,0}&x^{(i,i+j+1)}_{1,1}&\cdots&\cdots&\cdots&x^{(i,i+j+1)}_{1,l-1}&x^{(i,i+j+1)}_{1,l}&\cdots\\
		\vdots&\vdots&\cdots&\cdots&\cdots&\vdots&\vdots&\cdots\\
        x^{(i,i+j+1)}_{j+1,0}&x^{(i,i+j+1)}_{j+1,1}&\cdots&\cdots&\cdots&x^{(i,i+j+1)}_{j+1,l-1}&x^{(i,i+j+1)}_{j+1,l}&\cdots\\
        &x^{(i,i+j+1)}_{j+2,1}&\cdots&\cdots&\cdots&x^{(i,i+j+1)}_{j+2,l-1}&x^{(i,i+j+1)}_{j+2,l}&\cdots\\
        &&\ddots&\ddots&\ddots&\vdots&\vdots&\cdots\\
		&&x^{(i,i+j+1)}_{l-1,l-j-2}&\cdots&\cdots&x^{(i,i+j+1)}_{l-1,l-1}&x^{(i,i+j+1)}_{l-1,l}&\cdots\\
		&&&x^{(i,i+j+1)}_{l,l-j-1}&\cdots&\cdots&x^{(i,i+j+1)}_{l,l}&\cdots\\
		&&&&&\ddots&\ddots
	\end{smallmatrix}\right).
$$
\end{comment}

Now, we continue to prove the claim. Our purpose is to find elements $v_{i}\in\tilde{\mathcal{H}}_{i}$ for $1\le i\le n-1$ such that $v=(v_{1},\cdots,v_{n-1},\tilde{e}^{(n)}_{0})\textsuperscript{T}$ is in the kernel of the operator $X$, that is,
\begin{equation*}
\setlength\abovedisplayskip{3pt}
\setlength\belowdisplayskip{3pt}
\begin{pmatrix}
X_{1,1}&X_{1,2}&X_{1,3}&\cdots&X_{1,n}\\
0&X_{2,2}&X_{2,3}&\cdots&X_{2,n}\\
\vdots&\ddots&\ddots&\ddots&\vdots\\
0&\cdots&0&X_{n-1,n-1}&X_{n-1,n}\\
0&\cdots&\cdots&0&X_{n,n}
\end{pmatrix}
\begin{pmatrix}
v_{1}\\
v_{2}\\
\vdots\\
v_{n-1}\\
\tilde{e}^{(n)}_{0}
\end{pmatrix}
=\begin{pmatrix}
0\\
0\\
\vdots\\
0\\
0
\end{pmatrix}.
\end{equation*}
Since $X_{n-1,n}(\tilde{e}^{(n)}_{0})=x^{(n-1,n)}_{0,0}e^{(n-1)}_{0}+x^{(n-1,n)}_{1,0}e^{(n-1)}_{1}\ne0$, it can be seen that
\begin{align*}
&X_{n-1,n-1}\Big(\frac{x^{(n-1,n)}_{0,0}}{x^{(n-1)}_{0,0}}\tilde{e}^{(n-1)}_{0}-\frac{x^{(n-1)}_{0,1}}{\tilde{w}^{(n-1)}_{0}x^{(n-1)}_{0,0}}\tilde{e}^{(n-1)}_{0}+\frac{1}{\tilde{w}^{(n-1)}_{0}}\tilde{e}^{(n-1)}_{1}\Big)\\
&=x^{(n-1,n)}_{0,0}e^{(n-1)}_{0}-\frac{x^{(n-1)}_{0,1}}{\tilde{w}^{(n-1)}_{0}}e^{(n-1)}_{0}+\frac{1}{\tilde{w}^{(n-1)}_{0}}\big(x^{(n-1)}_{0,1}e^{(n-1)}_{0}+x^{(n-1)}_{1,1}e^{(n-1)}_{1}\big)\\
&=x^{(n-1,n)}_{0,0}e^{(n-1)}_{0}+\frac{x^{(n-1)}_{1,1}}{\tilde{w}^{(n-1)}_{0}}e^{(n-1)}_{1}\\
&=x^{(n-1,n)}_{0,0}e^{(n-1)}_{0}+\frac{x^{(n-1)}_{0,0}}{w^{(n-1)}_{0}}e^{(n-1)}_{1}\\
&=x^{(n-1,n)}_{0,0}e^{(n-1)}_{0}+x^{(n-1,n)}_{1,0}e^{(n-1)}_{1}\\
&=X_{n-1,n}(\tilde{e}^{(n)}_{0}).
\end{align*}
Let $v_{n-1}:=-\frac{x^{(n-1,n)}_{0,0}}{x^{(n-1)}_{0,0}}\tilde{e}^{(n-1)}_{0}+\frac{x^{(n-1)}_{0,1}}{\tilde{w}^{(n-1)}_{0}x^{(n-1)}_{0,0}}\tilde{e}^{(n-1)}_{0}-\frac{1}{\tilde{w}^{(n-1)}_{0}}\tilde{e}^{(n-1)}_{1}$.
Then $v_{n-1}$ is a nonzero element in the space $\tilde{\mathcal{H}}_{n-1}$, which satisfies that $X_{n-1,n-1}(v_{n-1})+X_{n-1,n}(\tilde{e}^{(n)}_{0})=0$. Now suppose that we have found $v_{n-1},\cdots,v_{n-k}$ that satisfy $v_{n-i}=\sum\limits_{j=0}^{i}v^{(n-i)}_{j}\tilde{e}^{(n-i)}_{j}$ for $1\le i\le k$ and
\begin{equation*}
\setlength\abovedisplayskip{3pt}
\setlength\belowdisplayskip{3pt}
\begin{pmatrix}
X_{n-k,n-k}&X_{n-k,n-k+1}&X_{n-k,n-k+2}&\cdots&X_{n-k,n}\\
0&X_{n-k+1,n-k+1}&X_{n-k+1,n-k+2}&\cdots&X_{n-k+1,n}\\
\vdots&\ddots&\ddots&\ddots&\vdots\\
0&\cdots&0&X_{n-1,n-1}&X_{n-1,n}\\
0&\cdots&\cdots&0&X_{n,n}
\end{pmatrix}
\begin{pmatrix}
v_{n-k}\\
v_{n-k+1}\\
\vdots\\
v_{n-1}\\
\tilde{e}^{(n)}_{0}
\end{pmatrix}
=\begin{pmatrix}
0\\
0\\
\vdots\\
0\\
0
\end{pmatrix}.
\end{equation*}
We will prove that there exists an element $v_{n-k-1} $ in the space $\tilde{\mathcal{H}}_{n-k-1}$ of the form $v_{n-k-1}=\sum\limits_{j=0}^{k+1}v^{(n-k-1)}_{j}\tilde{e}^{(n-k-1)}_{j}$ such that the following equation holds:
\begin{equation*}
\setlength\abovedisplayskip{3pt}
\setlength\belowdisplayskip{3pt}
X_{n-k-1,n-k-1}(v_{n-k-1})=-\big(\sum\limits_{i=n-k}^{n-1}X_{n-k-1,i}(v_{i})+X_{n-k-1,n}(\tilde{e}^{(n)}_{0})\big).
\end{equation*}
Note that $X_{n-k-1,n}(\tilde{e}^{(n)}_{0})=\sum\limits_{m=0}^{k+1}x^{(n-k-1,n)}_{m,0}e^{(n-k-1)}_{m}$ and for $n-k\le i\le n-1$,
\begin{equation*}
\setlength\abovedisplayskip{3pt}
\setlength\belowdisplayskip{3pt}
X_{n-k-1,i}(v_{i})=X_{n-k-1,i}\big(\sum\limits_{j=0}^{n-i}v^{(i)}_{j}\tilde{e}^{(i)}_{j}\big)
=\sum\limits_{j=0}^{n-i}v^{(i)}_{j}\big(\sum\limits_{m=0}^{i-n+k+1+j}x^{(n-k-1,i)}_{m,j}e^{(n-k-1)}_{m}\big),
\end{equation*}
we get that $X_{n-k-1,i}(v_{i})=\sum\limits_{m=0}^{k+1}P^{(n-k-1,i)}_{m}e^{(n-k-1)}_{m}$ for $n-k\le i\le n-1$ and constants $P^{(n-k-1,i)}_{m}$.
Then we have that $-\big(\sum\limits_{i=n-k}^{n-1}X_{n-k-1,i}(v_{i})+X_{n-k-1,n}(\tilde{e}^{(n)}_{0})\big)=\sum\limits_{m=0}^{k+1}\tilde{P}_{m}e^{(n-k-1)}_{m}$
for some constants $\tilde{P}_{m}$. Since $X_{n-k-1,n-k-1}(\tilde{e}^{(n-k-1)}_{l})=\sum\limits_{m=0}^{l}x^{(n-k-1)}_{m,l}e^{(n-k-1)}_{m}$ for $l\ge0$, we have that $e^{(n-k-1)}_{0}=X_{n-k-1,n-k-1}(\frac{1}{x^{(n-k-1)}_{0,0}}\tilde{e}^{(n-k-1)}_{0})$ and
\begin{equation*}
\setlength\abovedisplayskip{3pt}
\setlength\belowdisplayskip{3pt}
e^{(n-k-1)}_{l}=X_{n-k-1,n-k-1}\big(\sum\limits_{m=0}^{l-1}\hat{x}^{(n-k-1)}_{m,l}\tilde{e}^{(n-k-1)}_{m}+\frac{1}{x^{(n-k-1)}_{l,l}}\tilde{e}^{(n-k-1)}_{l}\big)
\end{equation*}
for $l\le1$ and some constants $\hat{x}^{(n-k-1)}_{m,l}$. Therefore, we obtain that
\begin{align*}
\sum\limits_{m=0}^{k+1}\tilde{P}_{m}e^{(n-k-1)}_{m}
&=X_{n-k-1,n-k-1}(\frac{\tilde{P}_{0}}{x^{(n-k-1)}_{0,0}}\tilde{e}^{(n-k-1)}_{0})\\
&+\sum\limits_{m=1}^{k+1}\tilde{P}_{m}X_{n-k-1,n-k-1}\big(\sum\limits_{i=0}^{m-1}\hat{x}^{(n-k-1)}_{i,m}\tilde{e}^{(n-k-1)}_{i}+\frac{1}{x^{(n-k-1)}_{m,m}}\tilde{e}^{(n-k-1)}_{m}\big)\\
&=X_{n-k-1,n-k-1}(\frac{\tilde{P}_{0}}{x^{(n-k-1)}_{0,0}}\tilde{e}^{(n-k-1)}_{0})\\
&+X_{n-k-1,n-k-1}\big(\sum\limits_{m=1}^{k+1}\sum\limits_{i=0}^{m-1}\tilde{P}_{m}\hat{x}^{(n-k-1)}_{i,m}\tilde{e}^{(n-k-1)}_{i}\big)\\
&+X_{n-k-1,n-k-1}\big(\sum\limits_{m=1}^{k+1}\frac{\tilde{P}_{m}}{x^{(n-k-1)}_{m,m}}\tilde{e}^{(n-k-1)}_{m}\big)\\
&=X_{n-k-1,n-k-1}\big(\sum\limits_{m=0}^{k+1}Q_{m}\tilde{e}^{(n-k-1)}_{m}\big),
\end{align*}
for constants $Q_{m}$. Setting $v_{n-k-1}:=\sum\limits_{j=0}^{k+1}Q_{j}\tilde{e}^{(n-k-1)}_{j}$.
Based on the previous proof, we can find a nonzero vector $v$ that satisfies $X(v)=0$. However, this contradicts the fact that $X$ is injective. Therefore, $x^{(n)}_{0,0}\ne0$.

By analyzing the above process similarly for $Y$, we can also get $y^{(k)}_{0,0}\ne0$ for $1\le k\le n$. Thus, we have that
$0<\frac{\vert y^{(i)}_{0,0}\vert}{\Vert Y\Vert}\le\Big|\frac{\prod\limits_{k=0}^{l}\tilde{w}^{(i)}}{\prod\limits_{k=0}^{l}w^{(i)}}\Big|\le\frac{\Vert X\Vert}{\vert x^{(i)}_{0,0}\vert}<\infty$ for $l\ge0$ and $1\le i\le n$, which, by Shields's result in \cite{Shields}, means that $T_{i,i}\sim_{s}\tilde{T}_{i,i}$ for $1\le i\le n$.

Lastly, we define the operator $\tilde{X}_{i,i}:\mathcal{\tilde{H}}_{i}\rightarrow\mathcal{H}_{i}$, $1\le i\le n$, by $\tilde{X}_{i,i}(\tilde{e}^{(i)}_{0})=e^{(i)}_{0}$ and for $l\ge1$, $\tilde{X}_{i,i}(\tilde{e}^{(i)}_{l})=\frac{\prod\limits_{m=0}^{l-1}\tilde{w}^{(i)}_{m}}{\prod\limits_{m=0}^{l-1}w^{(i)}_{m}}e^{(i)}_{l}$.
Due to the similarity between $T_{k,k}$ and $\tilde{T}_{k,k}$, the operator $\tilde{X}_{k,k}$ is invertible for each $1\le k\le n$.
Set $\tilde{X}:=\tilde{X}_{1,1}\oplus\cdots\oplus\tilde{X}_{n,n}$.
Then, by a routine computation, the invertible operator $\tilde{X}$ satisfies $T\tilde{X}=\tilde{T}\tilde{X}$. This completes the proof.
\end{proof}

\textbf{Proof of Theorem \ref{theorem4.1}}: The conclusion can be proved directly together with Lemma \ref{lemma4} and Proposition \ref{proposition4.8}.

Finally, we apply Theorem \ref{theorem4.1} to Cowen-Douglas operators and multiplication operators acting on certain vector-valued reproducing kernel Hilbert spaces.
This allows us to provide many examples in which the similarity and quasi-similarity of high-index operators are identical.
For an introduction to Hilbert spaces containing analytic functions, the works \cite{Aronszajn1950,CS2,PR58} are good references.
%Although we have mentioned the Cowen-Douglas operators several times in previous texts, we have not yet defined them.
Below, we present a definition and point out the connection between these operators and the reproducing kernel theory.

\begin{defn}[\cite{CD}, Definition 1.2]
For a connected open subset $\Omega$ of $\mathbb{C}$ and a positive integer $n$, the class of Cowen-Douglas operators $\mathcal{B}_n(\Omega)$ is defined as follows:
$$\begin{array}{lll}{\mathcal{B}}_n(\Omega)=\{T\in \mathcal{B}(\mathcal{H}):
&(1)\,\,\Omega\subset \sigma(T):=\{w\in \mathbb{C}:T-w
\mbox{ is not invertible}\},\\
&(2)\,\,\mbox{dim}\ker(T-w)=n \text{ for all }w\in\Omega, \\
&(3)\,\,\bigvee_{w\in \Omega}\ker(T-w)=\mathcal{H},\text{ and}\\
&(4)\,\,\mbox{ran}(T-w)=\mathcal{H}\text{ for all }w\in\Omega\}.
\end{array}$$
\end{defn}

\begin{thm}[\cite{CD,CS2}]
An operator $T\in\mathcal{B}_n(\Omega)$ can be realized as the adjoint of the multiplication operator on a reproducing kernel Hilbert space of holomorphic $\mathbb{C}^n$-valued functions on $\Omega^*$ up to unitary equivalence, i.e. $T\sim_u(M_z^*,\mathcal{H}_K)$, $K:\Omega^*\times\Omega^*\rightarrow\mathcal{M}_n(\mathbb{C})$.
\end{thm}

The backward weighted shift operator is typically the Cowen-Douglas operator with index 1.
Cowen-Douglas operators have a rich and complicated structure. Even when the flag structure is added to these operators, many problems remain unknown, such as the classification of similarity and quasi-similarity.
Homogeneous operators in this class have good properties, thus their representations and unitary classifications are completely characterized by A. Kor\'{a}nyi and G. Misra in \cite{CM,KM,KMi09,KM1,KM2,Misra}.
An operator $T\in\mathcal{B}(\mathcal{H})$ with $\sigma(T)\subseteq\bar{\mathbb{D}}$ is said to be homogeneous if $\phi(T)$ is unitarily equivalent to $T$ for all $\phi\in$ M\"ob, where M\"ob denotes the group of all biholomorphic automorphisms of the unit disc $\mathbb{D}$.
In particular, using the curvature, a classical invariant of the line bundle, G. Misra in \cite{Misra} gave a concrete model for the homogeneous operator in $\mathcal{B}_1(\mathbb{D})$ as $T\sim_u(M_z^*,\mathcal{H}_{K^{(\lambda)}})$, $\lambda>0$, where $M^{*}_{z}$ is the adjoint of the multiplication operator on the reproducing kernel Hilbert space $\mathcal{H}_{K^{(\lambda)}}$ with the reproducing kernel $K^{(\lambda)}(z,w)=\frac{1}{(1-z\bar{w})^{\lambda}}$, $z,w\in\mathbb{D}$.
The results concerning homogeneous operators establish a deep connection between branches of operator theory, complex geometry, group representation theory, and reproducing kernel theory.
Furthermore, these operators facilitate the study of general Cowen-Douglas operators, and can be used as models and special cases to address various problems.
For instance, the irreducible homogeneous operators in $\mathcal{B}_2(\mathbb{D})$ can be represented by the homogeneous operators in $\mathcal{B}_1(\mathbb{D})$, which naturally have a flag structure.
More precisely, $T$ is a homogeneous operator in $\mathcal{FB}_2(\mathbb{D})$ if and only if $T\sim_u(M_z^*,\mathcal{H}_{K^{(\lambda,\lambda+2)}})$, where $\lambda>0$ and
\begin{equation*}
\setlength\abovedisplayskip{3pt}
\setlength\belowdisplayskip{3pt}
K^{(\lambda,\lambda+2)}(z,w)=\begin{pmatrix}K^{(\lambda)}(z,w)&\frac{\partial}{\partial\bar{w}}K^{(\lambda)}(z,w)\\
\frac{\partial}{\partial z}K^{(\lambda)}(z,w)&\frac{\partial^2}{\partial z\partial\bar{w}}K^{(\lambda)}(z,w)\end{pmatrix}
+\begin{pmatrix}0&0\\0&K^{(\lambda+2)}(z,w)\end{pmatrix}
\end{equation*}
for $z,w\in\mathbb{D}$.
The similarity equivalence problem of operators in this class have been partially inscribed in \cite{JJKM,JX,JJK,JJM,XJ}.
We then obtain a result for their quasi-similarity relation, finding that their similarity equivalence and quasi-similarity equivalence are the same.
This result is also a generalisation of Proposition 2.6 from M. Uchiyama's work in \cite{Uchiyama}.

\begin{example}\label{250724}
Let $T\sim_{u}(M^{*}_{z},\mathcal{H}_{K^{(\lambda,\lambda+2)}})$ and $\tilde{T}\sim_{u}(M^{*}_{z},\mathcal{H}_{\tilde{K}^{(\tilde{\lambda},\tilde{\lambda}+2)}})$. Then $T\sim_{q.s}\tilde{T}$ if and only if $T\sim_{s}\tilde{T}$.
\end{example}

According to Example \ref{250724}, we can provide an example involving the $\mathbb{C}^3$-valued reproducing kernel Hilbert space.
This result is also valid for the more general case of $\mathbb{C}^n$-valued reproducing kernel Hilbert spaces.
However, due to the overly complicated form of the general case, we will only offer a detailed introduction to the $n=3$ case.

Let $K_{i}(z,w)=\sum\limits_{n=0}^{\infty}a_{n}^{(i)}(z\bar{w})^{n},\ a_{n}^{(i)}>0,\ z,w\in\mathbb{D},\ i=1,2,3$
and
\begin{equation*}
\setlength\abovedisplayskip{3pt}
\setlength\belowdisplayskip{3pt}
K(z,w):=\left(\begin{smallmatrix}
K_{1}(z,w)&-\frac{\partial}{\partial\bar{w}}K_{1}(z,w)&\Diamond\\
-\frac{\partial}{\partial z}K_{1}(z,w)&\frac{\partial^{2}}{\partial z\partial\bar{w}}K_{1}(z,w)&\Diamond\Diamond\\
\Diamond\Diamond\Diamond&\Diamond\Diamond\Diamond\Diamond
&\Diamond\Diamond\Diamond\Diamond\Diamond
\end{smallmatrix}\right)
+\left(\begin{smallmatrix}
0&0&0\\
0&K_{2}(z,w)&-\frac{\partial}{\partial\bar{w}}K_{2}(z,w)\\
0&-\frac{\partial}{\partial z}K_{2}(z,w)&\frac{\partial^{2}}{\partial z\partial\bar{w}}K_{2}(z,w)
\end{smallmatrix}\right)
+\left(\begin{smallmatrix}
0&0&0\\
0&0&0\\
0&0&K_{3}(z,w)
\end{smallmatrix}\right),
\end{equation*}
where
{\small
\begin{align*}
\Diamond:=&\frac{1}{2}\frac{\partial^{2}}{\partial\bar{w}^{2}}K_{1}(z,w)-\frac{\partial}{\partial\bar{w}}(\phi(\bar{w})K_{1}(z,w)),\\
\Diamond\Diamond:=&-\frac{1}{2}\frac{\partial^{3}}{\partial z\partial\bar{w}^{2}}K_{1}(z,w)+\frac{\partial^{2}}{\partial z\partial\bar{w}}(\phi(\bar{w})K_{1}(z,w)),\\
\Diamond\Diamond\Diamond:=&\frac{1}{2}\frac{\partial^{2}}{\partial z^{2}}K_{1}(z,w)-\frac{\partial}{\partial z}(\overline{\phi(\bar{z})}K_{1}(z,w)),\\
\Diamond\Diamond\Diamond\Diamond:=&-\frac{1}{2}\frac{\partial^{3}}{\partial z^{2}\partial\bar{w}}K_{1}(z,w)+\frac{\partial^{2}}{\partial z\partial\bar{w}}(\overline{\phi(\bar{z})}K_{1}(z,w)),\\
\Diamond\Diamond\Diamond\Diamond\Diamond:=&\frac{1}{4}\frac{\partial^{4}}{\partial z^{2}\partial\bar{w}^{2}}K_{1}(z,w)-\frac{1}{2}\frac{\partial^{3}}{\partial z\partial\bar{w}^{2}}(\overline{\phi(\bar{z})}K_{1}(z,w))-\frac{1}{2}\frac{\partial^{3}}{\partial z^{2}\partial\bar{w}}(\phi(\bar{w})K_{1}(z,w))\\
&+\frac{\partial^{2}}{\partial z\partial\bar{w}}(\overline{\phi(\bar{z})}\phi(\bar{w})K_{1}(z,w))
\end{align*}
}
for $z,w\in\mathbb{D}$ and the function $\phi$ is holomorphic in the multiplier algebra,
\begin{equation*}
\setlength\abovedisplayskip{3pt}
\setlength\belowdisplayskip{3pt}
\mbox{Mult}(\mathcal{H}_{K_1}):=\{\psi\in\mbox{Hol}(\mathbb{D}):\psi f\in\mathcal{H}_{K_1}\ \mbox{for\ all}\ f\in\mathcal{H}_{K_1}\},
\end{equation*}
of the reproducing kernel Hilbert space $\mathcal{H}_{K_1}$ determined by $K_1$.
The binary function $\tilde{K}:\mathbb{D}\times\mathbb{D}\rightarrow\mathcal{M}_3(\mathbb{C})$ has a similar form to $K$ involving $\tilde{K}_{i}(z,w)=\sum\limits_{n=0}^{\infty}\tilde{a}_{n}^{(i)}(z\bar{w})^{n}$ and $\tilde{\phi}\in\mbox{Mult}(\mathcal{H}_{\tilde{K}_1})$, $\tilde{a}_{n}^{(i)}>0,\ z,w\in\mathbb{D},\ i=1,2,3$.
The next example will utilize these notations.
\begin{example}\label{corollary4.9}
Let $T=(M_{z}^{*},\mathcal{H}_{K})$ and $\tilde{T}=(M_{z}^{*},\mathcal{H}_{\tilde{K}})$.
%, where
%\begin{equation*}
%\setlength\abovedisplayskip{3pt}
%\setlength\belowdisplayskip{3pt}
%K_{i}(z,w)=\sum\limits_{n=0}^{\infty}a_{n}^{(i)}(z\bar{w})^{n},\ \tilde{K}_{i}(z,w)=\sum\limits_{n=0}^{\infty}\tilde{a}_{n}^{(i)}(z\bar{w})^{n},\ i=1,2,3
%\end{equation*}
%and $\phi,\tilde{\phi}$ are some holomorphic functions in the multiplier algebras $\mbox{Mult}(\mathcal{H}_{1}),\mbox{Mult}(\tilde{\mathcal{H}}_{1})$, respectively.
Then $T\sim_{q.s}\tilde{T}$ if and only if $T\sim_{s}\tilde{T}$.
\end{example}
\begin{proof}
Let
\begin{equation*}
\setlength\abovedisplayskip{3pt}
\setlength\belowdisplayskip{3pt}
\gamma_{1}(w)=\left(\begin{smallmatrix}K_{1}(\cdot,\bar{w})\\0\\0\end{smallmatrix}\right),
\gamma_{2}(w)=\left(\begin{smallmatrix}-\frac{\partial}{\partial w}K_{1}(\cdot,\bar{w})\\K_{2}(\cdot,\bar{w})\\0\end{smallmatrix}\right),
\gamma_{3}(w)=\left(\begin{smallmatrix}\frac{1}{2}\frac{\partial^{2}}{\partial w^{2}}K_{1}(\cdot,\bar{w})-\frac{\partial}{\partial w}(\phi(w)K_{1}(\cdot,\bar{w}))\\-\frac{\partial}{\partial w}K_{2}(\cdot,\bar{w})\\K_{3}(\cdot,\bar{w})\end{smallmatrix}\right).
\end{equation*}
Then $\gamma_i$, $i=1,2,3$, is a holomorphic function from $\mathbb{D}$ to $\mathcal{H}_{K_1}\oplus\mathcal{H}_{K_2}\oplus\mathcal{H}_{K_3}$ and they satisfy $K(z,w)=\big(\langle\gamma_{j}(\bar{w}),\gamma_{i}(\bar{z})\rangle\big)_{i,j=1}^{3}$. Similar constructions and verifications can be used to prove that $\tilde{H}_{K}$ also has a representation of this form.
These imply that $K$ and $\tilde{K}$ are non-negative definite kernels on $\mathbb{D}\times\mathbb{D}$,
and that $T=(M_{z}^{*},\mathcal{H}_{K})$ and $\tilde{T}=(M_{z}^{*},\mathcal{H}_{\tilde{K}})$ with respect to $\mathcal{H}_{K_1}\oplus\mathcal{H}_{K_2}\oplus\mathcal{H}_{K_3},\mathcal{H}_{\tilde{K}_1}\oplus\mathcal{H}_{\tilde{K}_2}\oplus\mathcal{H}_{\tilde{K}_3}$ are
$$T=\left(\begin{smallmatrix}T_{1,1}&T_{1,2}&T_{1,3}\\0&T_{2,2}&T_{2,3}\\0&0&T_{3,3}\end{smallmatrix}\right)
,\ \tilde{T}=\left(\begin{smallmatrix}\tilde{T}_{1,1}&\tilde{T}_{1,2}&\tilde{T}_{1,3}\\0&\tilde{T}_{2,2}&\tilde{T}_{2,3}\\0&0&\tilde{T}_{3,3}\end{smallmatrix}\right),\
\mbox{respectively}.$$

A routine verification shows that $(T-w)\gamma_{j}(w)=0$, $j=1,2,3$. More specifically, we have
$T_{i,i+1}K_{i+1}(\cdot,\bar{w})=K_{i}(\cdot,\bar{w}),\ i=1,2\ \,\text{and}\ \,T_{1,3}K_{3}(\cdot,\bar{w})=\phi(w)K_{1}(\cdot,\bar{w}),$
which means that $T_{i,i}T_{i,i+1}=T_{i,i+1}T_{i+1,i+1}\ \,\text{and}\ \,T_{1,3}=\phi(T_{1,1})T_{1,2}T_{2,3}.$
The same relationship applies to $\tilde{T}$ and $\tilde{K}$. Thus, both $T$ and $\tilde{T}$ possess flag structure and satisfy Condition (A).
Since $\{e_{l}^{(i)}:=\sqrt{a_{l}^{(i)}}z^{l}\}_{l=0}^{\infty}$ and $\{\tilde{e}_{l}^{(i)}:=\sqrt{\tilde{a}_{l}^{(i)}}z^{l}\}_{l=0}^{\infty}$ are the orthonormal bases of $\mathcal{H}_{K_i}$ and $\tilde{\mathcal{H}}_{\tilde{K}_i}$, respectively, we have
\begin{equation*}
\setlength\abovedisplayskip{3pt}
\setlength\belowdisplayskip{3pt}
T_{i,i+1}(e_{l}^{(i+1)})=\sqrt\frac{a_{l}^{(i)}}{a_{l}^{(i+1)}}e_{l}^{(i)},\ \tilde{T}_{i,i+1}(\tilde{e}_{l}^{(i+1)})=\sqrt\frac{\tilde{a}_{l}^{(i)}}{\tilde{a}_{l}^{(i+1)}}\tilde{e}_{l}^{(i)},\ l\ge0,\  i=1,2.
\end{equation*}
Therefore, the conclusion follows from Theorem \ref{theorem4.1}.
\end{proof}

\section{strong irreducibility for operators in $\mathcal{F}_{n}(\mathcal{A})$}\label{sec5}

%The above two results provide a specific form of intertwining between the two operators in $\mathcal{F}_{n}(\mathcal{A})$.
%Based on this characterisation,
%In this section, we explore the strong irreducibility of the operators in $\mathcal{F}_{n}(\mathcal{A})$ under quasi-similarity.
In this section, we explore the strong irreducibility of the operators in class $\mathcal{F}_{n}(\mathcal{A})$.
We knew that the class $\mathcal{F}_{n}(\mathcal{A})$ is exactly the Cowen-Douglas class with flag structure $\mathcal{FB}_{n}(\Omega)$ when $\mathcal{A}=\mathcal{B}_{1}(\Omega)$. Using the techniques from the spectral theory, C.L. Jiang and H. He showed that the strong irreducibility of operators in $\mathcal{B}_{n}(\Omega)$ is preserved under quasi-similairty.
Next we will use the language of operator matrices to prove that operators possessing Property (H) in the class $\mathcal{F}_{n}(\mathcal{A})$ preserve the strong irreducibility under quasi-similarity.

When $n=2$, the following equivalent condition for the strong irreducibility of an operator can be obtained.
This condition is also fundamental to describing the strong irreducibility of operators with a higher index.

\begin{lem}\label{lemma3.7}
Let $T=\left(\begin{smallmatrix}T_{1,1}&T_{1,2}\\0&T_{2,2}\end{smallmatrix}\right)\in\mathcal{F}_{2}(\mathcal{A})$, where $T_{i,i}\in(SI)$ and $\{T_{i,i}\}'$ is a  semi-simple commutative Banach algebra for each $i=1,2$. Then $T\in(SI)$ if and only if $T_{1,2}\notin\mbox{ran}\ \tau_{T_{1,1},T_{2,2}}$.
\end{lem}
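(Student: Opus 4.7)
The plan is to prove the two directions separately, with the reverse direction being the main conceptual step.

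For the ``if'' direction, I would argue by contrapositive: assume $T_{1,2}=\tau_{T_{1,1},T_{2,2}}(Y)=T_{1,1}Y-YT_{2,2}$ for some bounded $Y:\mathcal{H}_2\to\mathcal{H}_1$. Then the invertible operator $U=\left(\begin{smallmatrix}I&Y\\0&I\end{smallmatrix}\right)$ implements a similarity
\begin{equation*}
U\,T\,U^{-1}=\left(\begin{smallmatrix}T_{1,1}&T_{1,2}-(T_{1,1}Y-YT_{2,2})\\0&T_{2,2}\end{smallmatrix}\right)=\left(\begin{smallmatrix}T_{1,1}&0\\0&T_{2,2}\end{smallmatrix}\right),
\end{equation*}
so $T$ is similar to $T_{1,1}\oplus T_{2,2}$, which admits the nontrivial idempotent $I\oplus 0$ in its commutant. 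Pulling this idempotent back through $U$ produces a nontrivial idempotent in $\{T\}'$, contradicting $T\in(SI)$.

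For the ``only if'' direction, let $T_{1,2}\notin\mathrm{ran}\,\tau_{T_{1,1},T_{2,2}}$ and let $P\in\{T\}'$ be any idempotent. Since $T\in\mathcal{F}_2(\mathcal{A})$ with $\{T_{i,i}\}'$ semi-simple, Lemma \ref{proposition4.3} applies and forces $P$ to be upper-triangular, $P=\left(\begin{smallmatrix}P_{1,1}&P_{1,2}\\0&P_{2,2}\end{smallmatrix}\right)$. From $P^2=P$ and $PT=TP$ I read off that $P_{i,i}$ is an idempotent in $\{T_{i,i}\}'$ for $i=1,2$, so strong irreducibility of each $T_{i,i}$ forces $P_{i,i}\in\{0,I\}$. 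The $(1,2)$-entry of $PT=TP$ gives
\begin{equation*}
P_{1,1}T_{1,2}+P_{1,2}T_{2,2}=T_{1,1}P_{1,2}+T_{1,2}P_{2,2}.
\end{equation*}

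I would then run through the four cases for $(P_{1,1},P_{2,2})$. The diagonal cases $(0,0)$ and $(I,I)$, combined with the $(1,2)$-entry of $P^2=P$, namely $P_{1,1}P_{1,2}+P_{1,2}P_{2,2}=P_{1,2}$, force $P_{1,2}=0$ and hence $P=0$ or $P=I$. The mixed cases $(I,0)$ and $(0,I)$ reduce the displayed commutation equation to $T_{1,2}=\pm\,\tau_{T_{1,1},T_{2,2}}(P_{1,2})$, placing $T_{1,2}$ in $\mathrm{ran}\,\tau_{T_{1,1},T_{2,2}}$, contrary to hypothesis. Thus the only idempotents in $\{T\}'$ are the trivial ones, and $T\in(SI)$.

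The main obstacle is the clean upper-triangularity of $P$; this is exactly what Lemma \ref{proposition4.3} gives under our semi-simplicity hypothesis, so once that lemma is invoked the rest is a routine case analysis. No other technical ingredient is needed beyond $P_{i,i}$ being an idempotent commuting with the strongly irreducible $T_{i,i}$.
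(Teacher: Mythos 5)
Your proof is correct and follows essentially the same route as the paper: the ``if'' direction by a unipotent conjugation off-diagonalizing $T$, and the ``only if'' direction by forcing any idempotent $P\in\{T\}'$ into upper-triangular form, reading off that $P_{i,i}\in\{0,I\}$, and running the four cases to derive either triviality of $P$ or a contradiction with $T_{1,2}\notin\mathrm{ran}\,\tau_{T_{1,1},T_{2,2}}$. One small point in your favor: you correctly invoke Lemma \ref{proposition4.3} (upper-triangularity of commutants), whereas the paper cites Lemma \ref{proposition3.6} (which is stated for quasi-affinities and hence would not directly apply to an idempotent $P$); this appears to be a citation slip in the paper.
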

\begin{proof}
We first assume that $T$ is strongly irreducible. If $T_{1,2}\in\mbox{ran}\ \tau_{T_{1,1},T_{2,2}}$, then there exists a bounded operator $X$ such that $T_{1,1}X-XT_{2,2}=T_{1,2}$.
Thus, we have an invertible operator $\left(\begin{smallmatrix}I&-X\\0&I\end{smallmatrix}\right)$ such that $T\sim_{s}T_{1,1}\oplus T_{2,2}$, which contradicts the strong irreducibility of $T$.

Conversely, suppose that $P$ is an idempotent operator in $\{T\}'$. Then the Proposition \ref{proposition3.6} tells us that $P$ is upper-triangular, denoted by $P=\left(\begin{smallmatrix}P_{1,1}&P_{1,2}\\0&P_{2,2}\end{smallmatrix}\right)$. Then we have
\begin{equation*}
\setlength\abovedisplayskip{3pt}
\setlength\belowdisplayskip{3pt}
\left(\begin{smallmatrix}T_{1,1}&T_{1,2}\\0&T_{2,2}\end{smallmatrix}\right)
\left(\begin{smallmatrix}P_{1,1}&P_{1,2}\\0&P_{2,2}\end{smallmatrix}\right)
=\left(\begin{smallmatrix}P_{1,1}&P_{1,2}\\0&P_{2,2}\end{smallmatrix}\right)
\left(\begin{smallmatrix}T_{1,1}&T_{1,2}\\0&T_{2,2}\end{smallmatrix}\right),
\end{equation*}
which implies that $T_{1,1}P_{1,1}=P_{1,1}T_{1,1}$ and $T_{2,2}P_{2,2}=P_{2,2}T_{2,2}$. Since both $T_{1,1}$ and $T_{2,2}$ are strongly irreducible, then we have $P_{i,i}=0$ or $P_{i,i}=I$ for $i=1,2$. Next, we analyze all possibilities in detail:

\textbf{Case 1:} We assume $P=\left(\begin{smallmatrix}I&P_{1,2}\\0&0\end{smallmatrix}\right)$. Then from $TP=PT$, we obtain that $T_{1,1}P_{1,2}-P_{1,2}T_{2,2}=T_{1,2}$. This contradicts the fact $T_{1,2}\notin\mbox{ran}\tau_{T_{1,1},T_{2,2}}$.

\textbf{Case 2:} We assume $P=\left(\begin{smallmatrix}0&P_{1,2}\\0&I\end{smallmatrix}\right)$. Then from $TP=PT$, we obtain that $T_{1,1}P_{1,2}-P_{1,2}T_{2,2}=-T_{1,2}$. The same reason leads to a contradiction.

\textbf{Case 3:} We assume $P=\left(\begin{smallmatrix}I&P_{1,2}\\0&I\end{smallmatrix}\right)$. Since $P$ is idempotent, then $P_{1,2}=0$. In this case, $P$ is a trivial idempotent in $\{T\}'$.

\textbf{Case 4:} We assume $P=\left(\begin{smallmatrix}0&P_{1,2}\\0&0\end{smallmatrix}\right)$. Since $P$ is idempotent, then $P_{1,2}=0$. In this case, $P$ is a trivial idempotent in $\{T\}'$.

In summary, we show that if $T_{1,2}\notin\mbox{ran}\tau_{T_{1,1},T_{2,2}}$, then the idempotent operator in $\{T\}'$ must be $I$ or $0$, which means that $T$ is strongly irreducible.
\end{proof}

When $n>2$, the operator's structure becomes extremely complicated.
It is straightforward to demonstrate that the aforementioned condition is no longer necessary for strong irreducibility when $n>2$.

\begin{example}\label{250723}
Let $T=\left(\begin{smallmatrix}T_{1,1}&I&0\\0&T_{1,1}&T_{1,2}\\0&0&T_{2,2}\end{smallmatrix}\right)\in\mathcal{F}_{3}(\mathcal{A})$, where $T_{i,i}\in(SI)$, $\{T_{i,i}\}'$ is semi-simple for $i=1,2$ and $T_{1,2}\in\mbox{ran}\,\tau_{T_{1,1},T_{2,2}}$ but $T_{1,2}\notin\mbox{ran}\,\tau_{T_{1,1},T_{2,2}}^{2}$.
Suppose that $P=\left(\begin{smallmatrix}P_{1,1}&P_{1,2}&P_{1,3}\\0&P_{2,2}&P_{2,3}\\0&0&P_{3,3}\end{smallmatrix}\right)$ is an idempotent operator in $\{T\}'$.
Since $\left(\begin{smallmatrix}T_{1,1}&I\\0&T_{1,1}\end{smallmatrix}\right)$ and $T_{2,2}$ are both strongly irreducible, the idempotent operator $P$ can only be of the form
$$\left(\begin{smallmatrix}I&0&0\\0&I&0\\0&0&I\end{smallmatrix}\right), \left(\begin{smallmatrix}0&0&0\\0&0&0\\0&0&0\end{smallmatrix}\right), \left(\begin{smallmatrix}I&0&P_{1,3}\\0&I&P_{2,3}\\0&0&0\end{smallmatrix}\right), \left(\begin{smallmatrix}0&0&P_{1,3}\\0&0&P_{2,3}\\0&0&I\end{smallmatrix}\right). $$
The last two forms of $P$ correspond to $T_{1,2}=\tau_{T_{1,1},T_{2,2}}(\tau_{T_{1,1},T_{2,2}}(-P_{1,3}))$ and $T_{1,2}=\tau_{T_{1,1},T_{2,2}}(\tau_{T_{1,1},T_{2,2}}(P_{1,3}))$, respectively. We have $P_{1,3}\ne0$ because $T_{1,2}\ne0$.
However, both cases contradict $T_{1,2}\notin\mbox{ran}\,\tau_{T_{1,1},T_{2,2}}^{2}$.
Therefore, these two forms of the idempotent operator $P$ do not belong to $\{T\}'$.
Thus, the only nonzero idempotent operator in $\{T\}'$ is the identity operator, meaning that $T$ is strongly irreducible.
\end{example}
In Example \ref{250723}, since $\{T_{1,1}\}'$ is semi-simple, $I\notin\mbox{ran}\,\tau_{T_{1,1}}$.
An analogous proof to that in Lemma \ref{lemma3.7} yields the result that this condition is sufficient for the operator to be strongly irreducible.

\begin{lem}\label{lemma3.10}
Let $T=\big((T_{i,j})\big)_{i,j=1}^{n}\in\mathcal{F}_{n}(\mathcal{A})$, where $T_{i,i}\in(SI)$ and $\{T_{i,i}\}'$ is a semi-simple commutative Banach algebra for each  $1\le i\le n$.
If $T_{i,i+1}\notin\mbox{ran}\ \tau_{T_{i,i},T_{i+1,i+1}}$ for $1\le i\le n-1$, then $T\in(SI)$.
\end{lem}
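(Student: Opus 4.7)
The plan is to prove this inductively on $n$, with the base case $n = 2$ being exactly Lemma \ref{lemma3.7}. Fix $n \ge 3$ and assume the statement for size $n - 1$. I would decompose $T$ into the $2 \times 2$ block form
\[
T = \begin{pmatrix} T' & T_{\cdot,n} \\ 0 & T_{n,n} \end{pmatrix},
\]
where $T' = \big((T_{i,j})\big)_{i,j=1}^{n-1}$ and $T_{\cdot,n} = (T_{1,n},\dots,T_{n-1,n})^{\mathsf{T}}$. Every hypothesis of the lemma restricts to $T'$: each $T_{i,i}$ with $i \le n-1$ is strongly irreducible with semisimple commutant, and $T_{i,i+1} \notin \mbox{ran}\,\tau_{T_{i,i},T_{i+1,i+1}}$ persists for $i \le n-2$. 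So the inductive hypothesis yields that $T'$ is strongly irreducible.

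Next, let $P$ be an arbitrary idempotent in $\{T\}'$. Lemma \ref{proposition4.3} (applicable because each $\{T_{i,i}\}'$ is semisimple) guarantees that $P$ is block upper-triangular, so I can write $P = \begin{pmatrix} P' & P_{\cdot,n} \\ 0 & P_{n,n} \end{pmatrix}$. Computing $P^2$ block-wise and using upper-triangularity gives $(P')^2 = P'$ and $P_{n,n}^2 = P_{n,n}$, while $TP = PT$ restricted to the diagonal blocks gives $T'P' = P'T'$ and $T_{n,n}P_{n,n} = P_{n,n}T_{n,n}$. Hence $P'$ is an idempotent in $\{T'\}'$ and $P_{n,n}$ is an idempotent in $\{T_{n,n}\}'$; by the inductive step and the strong irreducibility of $T_{n,n}$, both must lie in $\{0, I\}$.

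Two of the four resulting cases are killed by idempotency alone: if $P' = 0 = P_{n,n}$ then $P^2 = 0$, forcing $P = 0$; if $P' = I = P_{n,n}$ then a direct computation gives $(P^2)_{i,n} = 2P_{i,n}$ for $i < n$, so $P_{\cdot,n} = 0$ and $P = I$. The remaining two mixed cases $(P', P_{n,n}) = (I, 0)$ and $(0, I)$ are handled by reading off the $(n-1)$-th row of the off-diagonal block of $TP = PT$, namely
\[
T'P_{\cdot,n} + T_{\cdot,n}P_{n,n} = P'T_{\cdot,n} + P_{\cdot,n}T_{n,n}.
\]
Because $T'$ and $P'$ are both upper-triangular, the $(n-1)$-th entry of $T'P_{\cdot,n}$ is $T_{n-1,n-1}P_{n-1,n}$ and that of $P'T_{\cdot,n}$ is $P_{n-1,n-1}T_{n-1,n}$, so each mixed case collapses to $\tau_{T_{n-1,n-1},T_{n,n}}(\pm P_{n-1,n}) = T_{n-1,n}$, contradicting the hypothesis $T_{n-1,n} \notin \mbox{ran}\,\tau_{T_{n-1,n-1},T_{n,n}}$. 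Thus $P \in \{0, I\}$ and $T$ is strongly irreducible.

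The main obstacle is the $(n-1)$-row reduction of the off-diagonal block equation: one must verify that the sums $\sum_{j} T_{n-1,j} P_{j,n}$ and $\sum_{j} P_{n-1,j} T_{j,n}$ each collapse to a single term, so that the contradiction reduces precisely to the two-operator Rosenblum obstruction supplied by the hypothesis. Once upper-triangularity of $P$ via Lemma \ref{proposition4.3} and strong irreducibility of $T'$ via induction are in place, the rest is routine block-matrix bookkeeping combined with the elementary observation that the pure-diagonal cases of $(P', P_{n,n})$ are already decided by $P^2 = P$.
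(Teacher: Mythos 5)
Your proposal is correct and uses exactly the ingredients the paper intends when it says the result follows ``by a similar proof with above Lemma \ref{lemma3.7}'': upper-triangularity of the idempotent via Lemma \ref{proposition4.3}, triviality of the diagonal idempotent blocks from strong irreducibility, the Rosenblum obstruction $T_{i,i+1}\notin\mbox{ran}\,\tau_{T_{i,i},T_{i+1,i+1}}$ to kill the mixed cases, and idempotency to kill the corner block. Organizing this as an induction on $n$ rather than a direct case analysis over the diagonal pattern of $P$ is only a bookkeeping difference, so the argument is essentially the paper's own.
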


Next, we will present the main theorem of this section (see the following Theorem \ref{theorem5.5}).
This states that if a natural property is added to an operator in $\mathcal{F}_{n}(\mathcal{A})$, its strong irreducibility must be maintained up to quasi-similarity.
This property naturally exists among many operators, as detailed in Section 3 of \cite{JJK}.

\begin{defn}[\cite{JJK}, Definition 2.3]
Let $T_{1},T_{2}\in\mathcal{B}(\mathcal{H})$. We say that $T_{1}$ and $T_{2}$ satisfy \textbf{Property(H)} if the following condition holds: If $X$ is a bounded linear operator defined on $\mathcal{H}$ such that $T_{1}X=XT_{2}$  and $X=T_{1}Y-YT_{2}$ for some $Y\in\mathcal{B}(\mathcal{H})$, then $X=0$.
\end{defn}
Assuming that the operators satisfy Property (H), we can then identify a class of operators that satisfies the Question \ref{question1.1} posed by C.L. Jiang.

\begin{thm}\label{theorem5.5}
Let $T=\big((T_{i,j})\big)_{i,j=1}^{n},\tilde{T}=\big((\tilde{T}_{i,j})\big)_{i,j=1}^{n}\in\mathcal{F}_{n}(\mathcal{A})$, where $T_{i,i}$ (resp., $\tilde{T}_{i,i}$) is strongly irreducible and $\{T_{i,i}\}'$ (resp., $\{\tilde{T}_{i,i}\}'$) is a semi-simple commutative Banach algebra for each $1\le i\le n$. Suppose that $\tilde{T}$ satisfies Property (H). If $T\sim_{q.s}\tilde{T}$, then $T\in(SI)$.
\end{thm}
\begin{proof}
Suppose that $X=\big((X_{i,j})\big)_{i,j=1}^{n}$ and $Y=\big((Y_{i,j})\big)_{i,j=1}^{n}$ are two quasi-affine intertwiners such that $TX=X\tilde{T}$ and $YT=\tilde{T}Y$.
According to Proposition \ref{proposition3.7}, we know that $X$ and $Y$ are upper-triangular. From the relations $TX=X\tilde{T}$ and $YT=\tilde{T}Y$, we can deduce that, for $1\le i\le n$,
\begin{equation}\label{equation3.3}
\setlength\abovedisplayskip{3pt}
\setlength\belowdisplayskip{3pt}
T_{i,i}X_{i,i}=X_{i,i}\tilde{T}_{i,i},\ Y_{i,i}T_{i,i}=\tilde{T}_{i,i}Y_{i,i}.
\end{equation}
Next, we will show that $T$ also satisfies Property (H). For any $1\le i\le n-1$, we assume that $T_{i,i}Z_{i}=Z_{i}T_{i+1,i+1}$ and $T_{i,i}W_{i}-W_{i}T_{i+1,i+1}=Z_{i}$,
where $Z_{i}$ and $W_{i}$ are bounded operators. Then using the equation (\ref{equation3.3}), we have that
\begin{equation*}
\setlength\abovedisplayskip{3pt}
\setlength\belowdisplayskip{3pt}
\tilde{T}_{i,i}Y_{i,i}Z_{i}X_{i+1,i+1}=Y_{i,i}Z_{i}X_{i+1,i+1}\tilde{T}_{i+1,i+1}
\end{equation*}
and
\begin{equation*}
\setlength\abovedisplayskip{3pt}
\setlength\belowdisplayskip{3pt}
\tilde{T}_{i,i}Y_{i,i}W_{i}X_{i+1,i+1}-Y_{i,i}W_{i}X_{i+1,i+1}\tilde{T}_{i+1,i+1}=Y_{i,i}Z_{i}X_{i+1,i+1}.
\end{equation*}
Since $\tilde{T}$ satisfies Property (H), we have that $Y_{i,i}Z_{i}X_{i+1,i+1}=0$, $1\le i\le n-1$. In addition, the fact that both $Y_{i,i}$ and $X_{i+1,i+1}$ have dense ranges leads to the result $Z_{i}=0$. This shows that $T$ satisfies Property (H).
Since the nonzero operator $T_{i,i+1}$ is in the kernel of the Rosenblum operator $\tau_{T_{i,i},T_{i+1,i+1}}$ for $1\le i\le n-1$, we have $T_{i,i+1}\notin\mbox{ran}\ \tau_{T_{i,i},T_{i+1,i+1}}$, which implies that $T$ is strongly irreducible.
\end{proof}
According to Section 3 of \cite{JJK}, Property (H) can be formulated in terms of certain operator classes. This leads to the following two corollaries:
\begin{cor}
Let $T=\left(\begin{smallmatrix}T_{1,1}&T_{1,2}\\0&T_{2,2}\end{smallmatrix}\right),\tilde{T}=\left(\begin{smallmatrix}\tilde{T}_{1,1}&\tilde{T}_{1,2}\\0&\tilde{T}_{2,2}\end{smallmatrix}\right)\in\mathcal{F}_{2}(\mathcal{A})$, where $T_{i,i},\tilde{T}_{i,i}\in(SI)$ and $\{T_{i,i}\}',\{\tilde{T}_{i,i}\}'$ are semi-simple commutative Banach algebras for $i=1,2$, respectively. Suppose that $\ker\tau_{\tilde{T}_{2,2},\tilde{T}_{1,1}}\ne\{0\}$. If $T\sim_{q.s}\tilde{T}$, then $T\in(SI)$.
\end{cor}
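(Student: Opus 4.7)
The plan is to deduce this corollary from the preceding theorem by checking that, in the $n=2$ setting, the assumption $\ker\tau_{\tilde{T}_{2,2},\tilde{T}_{1,1}}\ne\{0\}$ is strong enough to force $\tilde{T}$ to satisfy Property (H) for the pair $(\tilde{T}_{1,1},\tilde{T}_{2,2})$. Once this is established, the strong irreducibility of $T$ follows immediately from the preceding theorem.

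The first step is to pick a nonzero $S\in\ker\tau_{\tilde{T}_{2,2},\tilde{T}_{1,1}}$, so $\tilde{T}_{2,2}S=S\tilde{T}_{1,1}$. Because $\tilde{T}_{1,1},\tilde{T}_{2,2}\in\mathcal{A}$ and $\mathcal{A}$ satisfies the defining condition (\ref{equation3.1}), this $S$ automatically has dense range. This is precisely the input the hypothesis is designed to supply.

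The heart of the argument is the right-multiplication trick $X\mapsto XS$. Suppose $X$ is a bounded operator with $\tilde{T}_{1,1}X=X\tilde{T}_{2,2}$ and $X=\tilde{T}_{1,1}Y-Y\tilde{T}_{2,2}$ for some bounded $Y$. Combining the intertwining of $X$ with the intertwining of $S$ yields
\begin{equation*}
\tilde{T}_{1,1}(XS)=X\tilde{T}_{2,2}S=XS\tilde{T}_{1,1},
\end{equation*}
so $XS\in\ker\tau_{\tilde{T}_{1,1}}$. Substituting the second representation of $X$,
\begin{equation*}
XS=(\tilde{T}_{1,1}Y-Y\tilde{T}_{2,2})S=\tilde{T}_{1,1}(YS)-(YS)\tilde{T}_{1,1},
\end{equation*}
so $XS\in\mbox{ran}\,\tau_{\tilde{T}_{1,1}}$. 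Since $\{\tilde{T}_{1,1}\}'$ is semi-simple, Corollary \ref{corollary3.5} forces $XS=0$, and the dense range of $S$ then forces $X=0$. Hence the pair $(\tilde{T}_{1,1},\tilde{T}_{2,2})$ satisfies Property (H), and the preceding theorem applies to give the strong irreducibility of $T$.

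The only subtlety worth flagging is the index swap between the two Rosenblum operators: the hypothesis provides a nonzero element of $\ker\tau_{\tilde{T}_{2,2},\tilde{T}_{1,1}}$, whereas Property (H) concerns $\tau_{\tilde{T}_{1,1},\tilde{T}_{2,2}}$. Right multiplication by $S$ is precisely the bridge between these two, converting a mixed-index intertwining relation into a single-operator commutant/range statement so that the semi-simplicity assumption on $\{\tilde{T}_{1,1}\}'$ can be exploited via Corollary \ref{corollary3.5}. There is no additional calculational obstacle beyond writing down this one product.
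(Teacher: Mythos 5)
Your argument is correct and is the natural route the paper intends (the corollary is stated without proof as an application of the preceding theorem). You correctly identify that the only thing to check is Property (H) for the pair $(\tilde{T}_{1,1},\tilde{T}_{2,2})$, and the right-multiplication by a dense-range $S\in\ker\tau_{\tilde{T}_{2,2},\tilde{T}_{1,1}}$ — whose dense range is exactly what the defining condition (\ref{equation3.1}) of $\mathcal{A}$ supplies — transfers the mixed-index condition to $\ker\tau_{\tilde{T}_{1,1}}\cap\mbox{ran}\,\tau_{\tilde{T}_{1,1}}$, which is trivial by semi-simplicity of $\{\tilde{T}_{1,1}\}'$ via Corollary \ref{corollary3.5}; the dense range of $S$ then forces $X=0$, completing the verification.
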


In the case of the backward weighted shift operator, according to Proposition \ref{250721}, we have the following result.
\begin{cor}
Let $T=\left(\begin{smallmatrix}T_{1,1}&T_{1,2}\\0&T_{2,2}\end{smallmatrix}\right),\tilde{T}=\left(\begin{smallmatrix}\tilde{T}_{1,1}&\tilde{T}_{1,2}\\0&\tilde{T}_{2,2}\end{smallmatrix}\right)\in\mathcal{F}_{2}(\mathcal{A})$, where $T_{i,i},\tilde{T}_{i,i}$ are backward weighted shift operators with nonzero weighted sequences $\{w_{k}^{(i)}\}_{k\ge0}$ and $\{\tilde{w}_{k}^{(i)}\}_{k\ge0}$, respectively. Suppose that $\lim\limits_{l\rightarrow\infty}l\frac{\prod\limits_{k=0}^{l-1}\tilde{w}_{k}^{(2)}}{\prod\limits_{k=0}^{l-1}\tilde{w}_{k}^{(1)}}=\infty.$
If $T\sim_{q.s}\tilde{T}$, then $T\in(SI)$.
\end{cor}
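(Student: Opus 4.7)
The plan is to deduce this corollary from the preceding theorem by verifying each of its hypotheses for the backward-weighted-shift case. First I would check that every diagonal entry $T_{i,i},\tilde T_{i,i}$ is strongly irreducible with a semi-simple commutative commutant. Strong irreducibility of an injective backward weighted shift is classical. For the commutant, Lemma \ref{Lemma2.2} identifies it with the algebra $\{M_\phi:\phi\in \mbox{H}^\infty(\beta)\}$, which is manifestly commutative. To see it is semi-simple, suppose $M_\phi$ is quasi-nilpotent. Then for every positive integer $n$ one has $M_\phi^n=M_{\phi^n}$, and Lemma \ref{lemma4.6} gives $|\phi(w)|^n=|\phi^n(w)|\le \|M_{\phi^n}\|=\|M_\phi^n\|$ for $|w|<r(T)$. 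Taking $n$-th roots and letting $n\to\infty$ forces $\phi(w)=0$ on that open disc; the identity principle then gives $\phi\equiv 0$, so $M_\phi=0$ and there are no nonzero quasi-nilpotent elements.

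Next I would verify Property (H) for $\tilde T$. Since $n=2$, what is needed is exactly $\ker\tau_{\tilde T_{1,1},\tilde T_{2,2}}\cap\mbox{ran}\,\tau_{\tilde T_{1,1},\tilde T_{2,2}}=\{0\}$, which is precisely the content of Property (H) for the pair $(\tilde T_{1,1},\tilde T_{2,2})$. I would apply Lemma \ref{lemma4.5} with $A=\tilde T_{1,1}$ and $B=\tilde T_{2,2}$, so that $a_k=\tilde w_k^{(1)}$ and $b_k=\tilde w_k^{(2)}$. The growth hypothesis assumed in the corollary, namely $\lim_{l\to\infty} l\,\prod_{k=0}^{l-1}\tilde w_k^{(2)}/\prod_{k=0}^{l-1}\tilde w_k^{(1)}=\infty$, is then literally the ``Moreover'' hypothesis in Lemma \ref{lemma4.5}, and the lemma delivers the desired trivial intersection.

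With these hypotheses in hand together with $T\sim_{q.s}\tilde T$, the preceding theorem applies and concludes that $T$ is strongly irreducible. I anticipate no essential obstacle; the only step that requires more than a one-line invocation is the semi-simplicity of the commutant, and even that reduces to Lemma \ref{lemma4.6} combined with the identity principle applied to $\phi^n$.
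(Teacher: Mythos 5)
Your proposal is correct and follows the intended route of reducing the corollary to the preceding theorem by checking its three hypotheses, with your semi-simplicity step being a minor variant of the paper's own Proposition in Section 4 (the $n$-th power trick in place of the spectral-mapping argument $\phi(\sigma(M_z))\subset\sigma(M_\phi)=\{0\}$, both resting on Lemmas \ref{Lemma2.2} and \ref{lemma4.6}). One imprecision worth flagging: the diagonal blocks are \emph{backward} weighted shifts, so Lemmas \ref{Lemma2.2} and \ref{lemma4.6} describe the commutant of the adjoint forward shift $T_{i,i}^{*}\cong M_{z}$, and you should pass to adjoints, i.e. $\{T_{i,i}\}'=\{M_{\phi}^{*}:\phi\in\mathrm{H}^{\infty}(\beta)\}$, noting that $M_{\phi}^{*}$ is quasi-nilpotent iff $M_{\phi}$ is, so that semi-simplicity transfers to $\{T_{i,i}\}'$ itself.
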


%The following result suggests that, even without Property (H), many quasi-similarity transformations preserve the strong irreducibility of operators in $\mathcal{F}_{n}(\mathcal{A})$.

We will conclude this section with three examples.
They provide three pairs of operators that are quasi-similar equivalent, and all of them have strong irreducibility.
Most research on the strong irreducibility of operators has focused on Cowen-Douglas operators. The operators in these examples go beyond the Cowen-Douglas class.
\begin{example}
Let $S$ be the unilateral shift on $\mathcal{H}$. Let
\begin{equation*}
\setlength\abovedisplayskip{3pt}
\setlength\belowdisplayskip{3pt}
T=\left(\begin{smallmatrix}I+S^{*}&I\\0&I+S^{*}\end{smallmatrix}\right),\quad
\tilde{T}=\left(\begin{smallmatrix}I+S^{*}&I+S^{*}\\0&I+S^{*}\end{smallmatrix}\right),\quad
X=\left(\begin{smallmatrix}I&0\\0&I+S^{*}\end{smallmatrix}\right),\quad
Y=\left(\begin{smallmatrix}(I+S^{*})^{2}&0\\0&I+S^{*}\end{smallmatrix}\right)\ \mbox{on}\ \mathcal{H}\oplus\mathcal{H}.
\end{equation*}
Then we have $TX=X\tilde{T}$ and $YT=\tilde{T}Y$. By the Example 2.35 in \cite{JW}, we can know that $T\sim_{q.s}\tilde{T}$ and $T$ is strongly irreducible. If $(I+S^{*})Z-Z(I+S^{*})=I+S^{*}$, then $S^{*}Z-ZS^{*}=I+S^{*}$.
By comparing the image of the above equation acting on the orthonormal basis $\{e_{n}\}_{n\ge0}$ of $\mathcal{H}$, we obtain that
\begin{equation*}
\setlength\abovedisplayskip{3pt}
\setlength\belowdisplayskip{3pt}
Z(e_{n})=\sum\limits_{k=0}^{n-1}a_{n-k}e_{k}+(a_{0}+n)e_{n}+(n+1)e_{n+1},\ n\ge0,
\end{equation*}
where $a_{0},a_{1},\cdots,a_{n},\cdots$ are the numbers to be determined.
It follows that $\Vert Z(e_{n})\Vert^{2}=\sum\limits_{k=0}^{n-1}a_{n-k}^{2}+(a_{0}+n)^{2}+(n+1)^{2}\rightarrow\infty$ as $n\rightarrow\infty$.
Hence, $I+S^{*}\notin\mbox{ran}\ \tau_{I+S^{*}}$. It follows from Lemma \ref{lemma3.7} that $\tilde{T}$ is also strongly irreducible.
\end{example}

\begin{example}
Let $T\in\mathcal{A}$ be an arbitrary injective operator whose commutant algebra $\{T\}'$ is semi-simple. Define the two operators in $\mathcal{F}_{2}(\mathcal{A})$ in the following way
\begin{equation*}
\setlength\abovedisplayskip{3pt}
\setlength\belowdisplayskip{3pt}
T_{1}=\left(\begin{smallmatrix}T&I\\0&T\end{smallmatrix}\right),\  T_{2}=\left(\begin{smallmatrix}T&T\\0&T\end{smallmatrix}\right).
\end{equation*}
Set $X=\left(\begin{smallmatrix}I&0\\0&T\end{smallmatrix}\right)$ and $Y=\left(\begin{smallmatrix}T^{2}&0\\0&T\end{smallmatrix}\right)$.
Since $T$ is an injective operator in class $\mathcal{A}$, it is clear that $X$ and $Y$ are both quasi-affine.
By a routine calculation, we can see that $T_{1}X=XT_{2}$ and $YT_{1}=T_{2}Y$, which imply that $T_{1}$ and $T_{2}$ are quasi-similar.
Moreover, the fact that $\{T\}'$ is semi-simple illustrates that $T_{1}$ and $T_{2}$ are both strongly irreducible.
\end{example}

Lastly, the third example suggests that quasi-similarity transformations may still preserve the strong irreducibility of operators in $\mathcal{F}_{n}(\mathcal{A})$ even without Property (H).

\begin{example}
Let $T=\left(\begin{smallmatrix}T_{1,1}&Z\\0&T_{2,2}\end{smallmatrix}\right),\tilde{T}=\left(\begin{smallmatrix}T_{1,1}&T_{1,2}\\0&T_{2,2}\end{smallmatrix}\right)\in\mathcal{F}_{2}(\mathcal{A})$, where $Z$ is invertible, $T_{1,2}\not\in\mbox{ran}\ \tau_{T_{1,1},T_{2,2}}$ is injective, $i=1,2$.
Suppose that $T_{i,i}\in(SI)$ and $\{T_{i,i}\}'$ is semi-simple for $i=1,2$. Set
$X=\left(\begin{smallmatrix}T_{1,2}Z^{-1}&0\\0&I\end{smallmatrix}\right)\ \mbox{and}\
Y=\left(\begin{smallmatrix}I&0\\0&Z^{-1}T_{1,2}\end{smallmatrix}\right)$.
It is obvious that both $X$ and $Y$ are quasi-affine and satisfy $XT=\tilde{T}X$ and $TY=Y\tilde{T}$, which shows that $T\sim_{q.s}\tilde{T}$. Furthermore, $T$ is strongly irreducible. Otherwise, by Lemma \ref{lemma3.7}, $Z$ is in the range of the operator $\tau_{T_{1,1},T_{2,2}}$, then so is $T_{1,2}$. This would lead to a contradiction.
\end{example}

However, for the general case that does not possess Property (H), the following remark will show that the method of operator matrices does not work.
\begin{rem}
Let $T=\big((T_{i,j})\big)_{i,j=1}^{n},\tilde{T}=\big((\tilde{T}_{i,j})\big)_{i,j=1}^{n}\in\mathcal{F}_{n}(\mathcal{A})$ and $T\sim_{q,s}\tilde{T}$, where $T_{i,i},\tilde{T}_{i,i}\in(SI)$, $\{T_{i,i}\}',\{\tilde{T}_{i,i}\}'$ are semi-simple commutative Banach algebras for $1\le i\le n$ and $\tilde{T}_{i,i+1}\notin\mbox{ran}\ \tau_{\tilde{T}_{i,i},\tilde{T}_{i+1,i+1}}$ for $1\le i\le n-1$.
If $T$ is not strongly irreducible, then from Lemma \ref{lemma3.10}, there exists some operator $T_{p,p+1}$ such that $T_{p,p+1}\in\tau_{T_{p,p},T_{p+1,p+1}}$. This enables us to obtain that
\begin{equation*}
\setlength\abovedisplayskip{3pt}
\setlength\belowdisplayskip{3pt}
\left(\begin{smallmatrix}
T_{1,1}&T_{1,2}&\cdots&\cdots&\cdots&T_{1,n}\\
&\ddots&\ddots&\ddots&\ddots&\vdots\\
&&T_{p,p}&T_{p,p+1}&\cdots&T_{p,n}\\
&&&T_{p+1,p+1}&\cdots&T_{p+1,n}\\
&&&&\ddots&\vdots\\
&&&&&T_{n,n}
\end{smallmatrix}\right)
\sim_{s}
\left(\begin{smallmatrix}
T_{1,1}&T_{1,2}&\cdots&\cdots&\cdots&*\\
&\ddots&\ddots&\ddots&\ddots&\vdots\\
&&T_{p,p}&0&\cdots&*\\
&&&T_{p+1,p+1}&\cdots&*\\
&&&&\ddots&\vdots\\
&&&&&T_{n,n}
\end{smallmatrix}\right)\triangleq\hat{T}.
\end{equation*}
Furthermore, it follows from $T\sim_{q.s}\tilde{T}$ that $\tilde{T}\sim_{q.s}\hat{T}$.
\begin{comment}
\begin{equation*}
\setlength\abovedisplayskip{3pt}
\setlength\belowdisplayskip{3pt}
\left(\begin{smallmatrix}
\tilde{T}_{1,1}&\tilde{T}_{1,2}&\cdots&\cdots&\cdots&\tilde{T}_{1,n-1}&\tilde{T}_{1,n}\\
&\ddots&\ddots&\ddots&\ddots&\ddots&\vdots\\
&&\tilde{T}_{i,i}&\tilde{T}_{i,i+1}&\cdots&\cdots&\vdots\\
&&&\tilde{T}_{i+1,i+1}&\cdots&\cdots&\vdots\\
&&&&\ddots&\ddots&\ddots
\end{smallmatrix}\right)
\sim_{q.s}
\left(\begin{smallmatrix}
T_{1,1}&T_{1,2}&\cdots&\cdots&\cdots&T_{1,n-1}&T_{1,n}\\
&\ddots&\ddots&\ddots&\ddots&\ddots&\vdots\\
&&T_{i,i}&0&\cdots&\cdots&\vdots\\
&&&T_{i+1,i+1}&\cdots&\cdots&\vdots\\
&&&&\ddots&\ddots&\ddots
\end{smallmatrix}\right).
\end{equation*}
\end{comment}
Suppose that $X=\big((X_{i,j})\big)_{i,j=1}^{n}$ and $Y=\big((Y_{i,j})\big)_{i,j=1}^{n}$ are two quasi-affine intertwining operators such that $\tilde{T}X=X\hat{T}$ and $Y\tilde{T}=\hat{T}Y$.
Then according to the Proposition \ref{proposition3.7}, we have $X$ and $Y$ are upper-triangular.
Moreover, the relation $\tilde{T}X=X\hat{T}$ and %$Y\tilde{T}=\hat{T}Y$
implies that
\begin{equation*}
\setlength\abovedisplayskip{3pt}
\setlength\belowdisplayskip{3pt}
%\begin{aligned}
%\tilde{T}_{p+1,p+1}X_{p+1,p+1}&=X_{p+1,p+1}T_{p+1,p+1},\\
\tilde{T}_{p,p}X_{p,p+1}-X_{p,p+1}T_{p+1,p+1}=-\tilde{T}_{p,p+1}X_{p+1,p+1}.
%T_{p+1,p+1}Y_{p+1,p+1}&=Y_{p+1,p+1}\tilde{T}_{p+1,p+1}.
%\end{aligned}
\end{equation*}
%Thus we have
%\begin{equation}\label{equation3.4}
%\setlength\abovedisplayskip{3pt}
%\setlength\belowdisplayskip{3pt}
%\tilde{T}_{p,p}X_{p,p+1}Y_{p+1,p+1}-X_{p,p+1}Y_{p+1,p+1}\tilde{T}_{p+1,p+1}=-\tilde{T}_{p,p+1}X_{p+1,p+1}Y_{p+1,p+1}.
%\end{equation}
%Denote the operators $X_{p,p+1}Y_{p+1,p+1}$ and $X_{p+1,p+1}Y_{p+1,p+1}$ as $W$ and $Z$, respectively, and then note that $Z$ has dense range.
Consequently, the above equation illustrates that for any $y:=X_{p+1,p+1}(x)\in\mbox{ran}\ X_{p+1,p+1}$, a dense linear space of the space $\tilde{\mathcal{H}}_{p+1}$, we have
\begin{equation*}
	\setlength\abovedisplayskip{3pt}
	\setlength\belowdisplayskip{3pt}
\big(\tilde{T}_{p,p}X_{p,p+1}-X_{p,p+1}\tilde{T}_{p+1,p+1}\big)(x)=-\tilde{T}_{p,p+1}(y).
\end{equation*}
Subsequently, for any $y\in\tilde{\mathcal{H}}_{p+1}$, there exists a sequence $\{y_{k}:=X_{p+1,p+1}(x_{k})\}\subseteq\mbox{ran}\ X_{p+1,p+1}$ such that $y=\lim\limits_{k\rightarrow\infty}y_{k}$, and thus
\begin{equation*}
\setlength\abovedisplayskip{3pt}
\setlength\belowdisplayskip{3pt}
-\tilde{T}_{p,p+1}(y)
=-\lim\limits_{k\rightarrow\infty}\tilde{T}_{p,p+1}X_{p+1,p+1}(x_{k})
=\lim\limits_{k\rightarrow\infty}\big(\tilde{T}_{p,p}X_{p,p+1}-X_{p,p+1}\tilde{T}_{p+1,p+1}\big)(x_{k}).
%=\big(\tilde{T}_{p,p}W-W\tilde{T}_{p+1,p+1}\big)(x).
\end{equation*}

If $\{x_{k}\}$ is a Cauchy sequence and if there exists a bounded linear operator $Z$ such that $Z(y):=\lim\limits_{k\rightarrow\infty}X_{p,p+1}(x_{k})$, then we obtain that $\tilde{T}_{p,p}Z-Z\tilde{T}_{p+1,p+1}=-\tilde{T}_{p,p+1}$, which will make a contradiction. Thus $T\in(SI)$.

However, these two conditions does not hold in general. First of all, the condition ``$\{x_{k}\}$ is a Cauchy sequence" implies that $y\in\mbox{ran}\ X_{p+1,p+1}$, which means that $X_{p+1,p+1}$ has closed range.
Even if the first condition is true, whether the operator $Z$ is well-defined will be affected by the injectivity of operator $X_{p+1,p+1}$. Moreover, if both of the conditions hold true, we will obtain that $X_{p+1,p+1}$ is an invertible operator.

In addition, we can also understand the difficulty of using operator matrices to deal with the quasi-similarity problem compared to the similarity problem. If $X=\big((X_{i,j})\big)_{i,j=1}^{n}$ is an  invertible intertwining operator, then we can obtain that $X_{i,i}$, $1\le i\le n$, is invertible; and if $X=\big((X_{i,j})\big)_{i,j=1}^{n}$ is a quasi-affine intertwining operator, then we can only get that $X_{1,1}$ is quasi-affine.

%Since $X$ is injective and $T_{1,1}X_{1,1}=X_{1,1}\tilde{T}_{1,1}$, then $X_{1,1}$ is injective and has dense range. Then following the assumption that $X_{1,1}$ has closed range, we obtain that $X_{1,1}$ is invertible. Thus $\tilde{T}_{1,2}$ is in the range of the operator $\tau_{\tilde{T}_{1,1},\tilde{T}_{2,2}}$, which makes a contradiction with the hypothesis.
\end{rem}

\section{Appendix}
Throughout the article, we realized that the fact that the two quasi-affine intertwining operators between any two operators in $\mathcal{F}_{n}(\mathcal{A})$ are both in the form of upper-triangular is the most basic and important premise work. For the sake of completeness of the paper, we present the specific proof details in this section.
%In this section, for the sake of the completeness of the paper, we characterize the structure of the operator in the commutant algebra of operator in $\mathcal{F}_{n}(\mathcal{A})$.
%Moreover, we describe that any two quasi-affines intertwining operators between two different operators in $\mathcal{F}_{n}(\mathcal{A})$ is upper-triangular form. %Therefore, to begin with, we will investigate the properties and composition of $\{T\}'$ for $T\in\mathcal{F}_{n}(\mathcal{A})$.

The quasi-nilpotent operator plays a key role in this process. Next, we will use the following property:
\begin{thm}[\cite{K}]\label{lemma2.1}
	Let $P$ and $T$ be two bounded linear operators. If $P\in\ker\tau_{T}\cap\mbox{ran}\,\tau_{T}$, then $P$ is a quasi-nilpotent.
\end{thm}
As stated in Section \ref{sec2}, if the commutant algebra of an operator is semi-simple, then there are no nonzero quasi-nilpotent operators in that algebra.
An immediate consequence of this theorem is the following evident fact, which we state as a corollary.

\begin{cor}\label{corollary3.5}
	Let $T$ be a bounded linear operator. If $\{T\}'$ is semi-simple, then $\ker\tau_{T}\cap\mbox{ran}\,\tau_{T}=\{0\}$.
\end{cor}

The properties of operators with index one are often easy to determine.
Adding a flag structure to the operator matrix generalises the results of many low-index operators to the high-index case. The following proposition shows that the commutant algebras of operators with flag structure possess a nice property. More specifically, the property of the commutant algebra of the operator's atoms can be effectively transmitted to the entire operator under the flag structure.

\begin{prop}\label{proposition3.6}
	Let $T=\big((T_{i,j})\big)_{i,j=1}^{n}\in\mathcal{F}_{n}(\mathcal{A})$ that satisfies $\{T_{i,i}\}'$ is a semi-simple commutative Banach algebra for $1\le i\le n$. If $X$ is a bounded linear operator such that $XT=TX$, then the operator matrix of $X$ is of upper-triangular form.
\end{prop}
\begin{proof}
	We first prove the case of $n=2$. Let $X=\left(\begin{smallmatrix}X_{1,1}&X_{1,2}\\X_{2,1}&X_{2,2}\end{smallmatrix}\right)$ be an intertwining operator such that
	\begin{equation*}
		\setlength\abovedisplayskip{3pt}
		\setlength\belowdisplayskip{3pt}
		\left(\begin{smallmatrix}X_{1,1}&X_{1,2}\\X_{2,1}&X_{2,2}\end{smallmatrix}\right)
		\left(\begin{smallmatrix}T_{1,1}&T_{1,2}\\0&T_{2,2}\end{smallmatrix}\right)=
		\left(\begin{smallmatrix}T_{1,1}&T_{1,2}\\0&T_{2,2}\end{smallmatrix}\right)
		\left(\begin{smallmatrix}X_{1,1}&X_{1,2}\\X_{2,1}&X_{2,2}\end{smallmatrix}\right).
	\end{equation*}
	Then it follows that $X_{2,1}T_{1,1}=T_{2,2}X_{2,1}$ and $X_{2,1}T_{1,2}+X_{2,2}T_{2,2}=T_{2,2}X_{2,2}$,
	which implies that the operator $X_{2,1}T_{1,2}$ is in $\ker\tau_{T_{2,2}}\cap\mbox{ran}\,\tau_{T_{2,2}}$. Thus $X_{2,1}T_{1,2}=0$ by Corollary \ref{corollary3.5}. Since $T_{1,2}$ has dense range, $X_{2,1}=0$. Otherwise, $X_{2,1}$ also has dense range, which contradicts the equality $X_{2,1}T_{1,2}=0$.
	
	Next, we consider the general case of $n$. Let $X=\big((X_{i,j})\big)_{i,j=1}^{n}$ be an intertwining operator such that $XT=TX$. Then we have the following claim:
	\begin{claim}
		$X_{n,j}=0$ for $1\le j\le n-1$.
	\end{claim}
	Following from the relation $XT=TX$, we could find that
	\begin{equation}\label{Equation3.3}
		\setlength\abovedisplayskip{3pt}
		\setlength\belowdisplayskip{3pt}
		X_{n,1}T_{1,j}+X_{n,2}T_{2,j}+\cdots+X_{n,j}T_{j,j}=T_{n,n}X_{n,j},\ 1\le j\le n.
	\end{equation}
	Then from the two equations obtained by setting $j = 1,2$ in Equation (\ref{Equation3.3}), we obtain that $X_{n,1}T_{1,2}T_{2,3}\cdots T_{n-1,n}\in \ker\tau_{T_{n,n}}\cap\mbox{ran}\,\tau_{T_{n,n}}$.
	Subsequently, the same reason leads to the conclusion $X_{n,1}=0$. Then the two equations obtained by setting $j = 2,3$ in Equation (\ref{Equation3.3}) shows that $X_{n,2}T_{2,3}\cdots T_{n-1,n}\in \ker\tau_{T_{n,n}}\cap\mbox{ran}\,\tau_{T_{n,n}}$. Thus $X_{n,2}=0$. This claim will be proved by repeating the above progress step by step.
	
	Now, let us write the two operators $T$ and $X$ in the form of $2\times 2$ block matrices:
	\begin{equation*}
		\setlength\abovedisplayskip{3pt}
		\setlength\belowdisplayskip{3pt}
		T=\left(\begin{smallmatrix}T_{n-1}&T_{(n-1)\times 1}\\0&T_{n,n}\end{smallmatrix}\right),\
		X=\left(\begin{smallmatrix}X_{n-1}&X_{(n-1)\times 1}\\0&X_{n,n}\end{smallmatrix}\right),
	\end{equation*}
	where $X_{n-1}=\big((X_{i,j})\big)_{i,j=1}^{n-1}$ and $X_{(n-1)\times 1}=(X_{1,n},\cdots,X_{n-1,n})\textsuperscript{T}$.
	Following the relation $XT=TX$, we get
	\begin{equation*}
		\setlength\abovedisplayskip{3pt}
		\setlength\belowdisplayskip{3pt}
		\left(\begin{smallmatrix}X_{n-1}&X_{(n-1)\times 1}\\0&X_{n,n}\end{smallmatrix}\right)
		\left(\begin{smallmatrix}T_{n-1}&T_{(n-1)\times 1}\\0&T_{n,n}\end{smallmatrix}\right)
		=\left(\begin{smallmatrix}T_{n-1}&T_{(n-1)\times 1}\\0&T_{n,n}\end{smallmatrix}\right)
		\left(\begin{smallmatrix}X_{n-1}&X_{(n-1)\times 1}\\0&X_{n,n}\end{smallmatrix}\right),
	\end{equation*}
	which implies that $X_{n-1}T_{n-1}=T_{n-1}X_{n-1}$.
	By an analogous proof, we can obtain that $X_{i,j}=0$ for all $1\le j<i\le n-1$, which shows that $X$ has an upper-triangular form.
\end{proof}

Moreover, when we consider two quasi-affine intertwining operators between two different operators in $\mathcal{F}_{n}(\mathcal{A})$, we find that they are both of upper-triangular form. This generalizes the result of the fourth author, together with C.L. Jiang, D.K. Keshari and G. Misra, in paper \cite{JJKM} to the quasi-affinity relation between operators that are broader than similar equivalence, while also expanding the operator class.

\begin{prop}\label{proposition3.7}
	Let $T=\big((T_{i,j})\big)_{i,j=1}^{n},\tilde{T}=\big((\tilde{T}_{i,j})\big)_{i,j=1}^{n}\in\mathcal{F}_{n}(\mathcal{A})$ satisfy the condition that $\{T_{i,i}\}'$ (resp., $\{\tilde{T}_{i,i}\}'$) is semi-simple commutative Banach algebra for $1\le i\le n$. If $X$ and $Y$ are two quasi-affine intertwiners such that $XT=\tilde{T}X$ and $TY=Y\tilde{T}$, then the operator matrices of $X$ and $Y$ are both of upper-triangular form.
\end{prop}

%To better understand the Proposition \ref{proposition3.7}, we have its following detailed proof.
\begin{proof}
	We first prove the case of $n=2$. Let $X=\left(\begin{smallmatrix}X_{1,1}&X_{1,2}\\X_{2,1}&X_{2,2}\end{smallmatrix}\right)$ and $Y=\left(\begin{smallmatrix}Y_{1,1}&Y_{1,2}\\Y_{2,1}&Y_{2,2}\end{smallmatrix}\right)$ be two quasi-affine intertwining operators such that
	\begin{equation*}
		\setlength\abovedisplayskip{3pt}
		\setlength\belowdisplayskip{3pt}
		\left(\begin{smallmatrix}X_{1,1}&X_{1,2}\\X_{2,1}&X_{2,2}\end{smallmatrix}\right)
		\left(\begin{smallmatrix}T_{1,1}&T_{1,2}\\0&T_{2,2}\end{smallmatrix}\right)=
		\left(\begin{smallmatrix}\tilde{T}_{1,1}&\tilde{T}_{1,2}\\0&\tilde{T}_{2,2}\end{smallmatrix}\right)
		\left(\begin{smallmatrix}X_{1,1}&X_{1,2}\\X_{2,1}&X_{2,2}\end{smallmatrix}\right),
	\end{equation*}
	and
	\begin{equation*}
		\setlength\abovedisplayskip{3pt}
		\setlength\belowdisplayskip{3pt}
		\left(\begin{smallmatrix}T_{1,1}&T_{1,2}\\0&T_{2,2}\end{smallmatrix}\right)
		\left(\begin{smallmatrix}Y_{1,1}&Y_{1,2}\\Y_{2,1}&Y_{2,2}\end{smallmatrix}\right)
		=\left(\begin{smallmatrix}Y_{1,1}&Y_{1,2}\\Y_{2,1}&Y_{2,2}\end{smallmatrix}\right)
		\left(\begin{smallmatrix}\tilde{T}_{1,1}&\tilde{T}_{1,2}\\0&\tilde{T}_{2,2}\end{smallmatrix}\right).
	\end{equation*}
	Then it follows that $X_{2,1}T_{1,1}=\tilde{T}_{2,2}X_{2,1},\ X_{2,1}T_{1,2}+X_{2,2}T_{2,2}=\tilde{T}_{2,2}X_{2,2}$ and $ T_{2,2}Y_{2,1}=Y_{2,1}\tilde{T}_{1,1}$.
	Furthermore, we have
	$X_{2,1}T_{1,2}Y_{2,1}\tilde{T}_{1,2}+X_{2,2}Y_{2,1}\tilde{T}_{1,1}\tilde{T}_{1,2}=\tilde{T}_{2,2}X_{2,2}Y_{2,1}\tilde{T}_{1,2}$.
	Since $\tilde{T}_{1,1}\tilde{T}_{1,2}=\tilde{T}_{1,2}\tilde{T}_{2,2}$, we obtain that $\tilde{T}_{2,2}X_{2,2}Y_{2,1}\tilde{T}_{1,2}-X_{2,2}Y_{2,1}\tilde{T}_{1,2}\tilde{T}_{2,2}=X_{2,1}T_{1,2}Y_{2,1}\tilde{T}_{1,2}$,
	which implies that $X_{2,1}T_{1,2}Y_{2,1}\tilde{T}_{1,2}\in\mbox{ran}\ \tau_{\tilde{T}_{2,2}}$. From $T_{1,1}T_{1,2}=T_{1,2}T_{2,2}$, we get
	\begin{equation*}
		\setlength\abovedisplayskip{3pt}
		\setlength\belowdisplayskip{3pt}
		\begin{aligned}
			\tilde{T}_{2,2}X_{2,1}T_{1,2}Y_{2,1}\tilde{T}_{1,2}
			&=X_{2,1}T_{1,1}T_{1,2}Y_{2,1}\tilde{T}_{1,2}\\
			&=X_{2,1}T_{1,2}T_{2,2}Y_{2,1}\tilde{T}_{1,2}\\
			&=X_{2,1}T_{1,2}Y_{2,1}\tilde{T}_{1,1}\tilde{T}_{1,2}\\
			&=X_{2,1}T_{1,2}Y_{2,1}\tilde{T}_{1,2}\tilde{T}_{2,2},
		\end{aligned}
	\end{equation*}
	which means that $X_{2,1}T_{1,2}Y_{2,1}\tilde{T}_{1,2}\in\ker\tau_{\tilde{T}_{2,2}}$. From Corollary \ref{corollary3.5}, we can obtain that $X_{2,1}T_{1,2}Y_{2,1}\tilde{T}_{1,2}=0$.
	Since the nonzero operators $T_{1,2},\tilde{T}_{1,2}$ both have dense range, $X_{2,1}T_{1,2}Y_{2,1}=0,$ which shows that at least one of $X_{2,1}$ and $Y_{2,1}$ is equal to 0.
	Without loss of generality, we assume that $X_{2,1}$ is zero. From $XT=\tilde{T}X$ and $TY=Y\tilde{T}$, and by a calculation, we obtain $\tilde{T}XY=XY\tilde{T}$.
	Therefore, by Proposition \ref{proposition3.6}, $XY$ is upper-triangular, which implies that $X_{2,2}Y_{2,1}=0$.
	Since $X$ has dense range, so does $X_{2,2}$, and then $Y_{2,1}=0$. Consequently, we are led to the conclusion that both $X$ and $Y$ are upper-triangular.
	
	Suppose that $X=\big((X_{i,j})\big)_{i,j=1}^{n}$ and $Y=\big((Y_{i,j})\big)_{i,j=1}^{n}$ are two quasi-affine intertwining operators such that $XT=\tilde{T}X$ and $TY=Y\tilde{T}$. Then it is clear that $(XY)\tilde{T}=\tilde{T}(XY)$ and $(YX)T=T(YX)$, which give that  $XY$ and $YX$ are both upper-triangular by Proposition \ref{proposition3.6}.
	We will then prove the following claim.
	\begin{claim}
		$X_{n,j}=0$ and $Y_{n,j}=0$ for $1\le j\le n-1$.
	\end{claim}
	Below, we will prove by contradiction that this claim, namely that none of the three cases listed below can be true.
	\begin{enumerate}
		\item $X_{n,j}=0$, $1\le j\le n-1$, but $Y_{n,j}\ne0$ for some $1\le j\le n-1$.
		
		\item $Y_{n,j}=0$, $1\le j\le n-1$, but $X_{n,j}\ne0$ for some $1\le j\le n-1$.
		
		\item $X_{n,j}\ne0$ for some $1\le j\le n-1$ and $Y_{n,k}\ne0$ for some $1\le k\le n-1$.
	\end{enumerate}
	Since $XY$ is upper-triangular and $X_{n,n}$ has dense range, then $Y_{n,j}=0$ for all $1\le j\le n-1$ if $X_{n,j}=0$ for each $1\le j\le n-1$. Similarly, the fact that $YX$ is upper-triangular and $Y_{n,n}$ has dense range indicates that the second case is also invalid. So, next, we just consider the last possibility. Pick $p,q$ to be the smallest indices for which $X_{n,p}\ne0$ and $Y_{n,q}\ne0$. Consequently, the relationships $XT=\tilde{T}X$ and $TY=Y\tilde{T}$ yield that
	\begin{equation*}
		\setlength\abovedisplayskip{3pt}
		\setlength\belowdisplayskip{3pt}
		X_{n,p}T_{p,p}=\tilde{T}_{n,n}X_{n,p},\ X_{n,p}T_{p,p+1}+X_{n,p+1}T_{p+1,p+1}=\tilde{T}_{n,n}X_{n,p+1},\ Y_{n,q}\tilde{T}_{q,q}=T_{n,n}Y_{n,q}.
	\end{equation*}
	Since $T_{i,i}T_{i,i+1}=T_{i,i+1}T_{i+1,i+1}$ for $1\le i\le n-1$, we have
	$$\begin{array}{lll}
		&&X_{n,p}T_{p,p+1}T_{p+1,p+2}\cdots T_{n-1,n}Y_{n,q}\tilde{T}_{q,q+1}\cdots\tilde{T}_{n-1,n}\\
		&=&(\tilde{T}_{n,n}X_{n,p+1}-X_{n,p+1}T_{p+1,p+1})T_{p+1,p+2}\cdots T_{n-1,n}Y_{n,q}\tilde{T}_{q,q+1}\cdots\tilde{T}_{n-1,n}\\
		&=&\tilde{T}_{n,n}X_{n,p+1}T_{p+1,p+2}\cdots\tilde{T}_{n-1,n}-X_{n,p+1}T_{p+1,p+2}\cdots\tilde{T}_{n-1,n}\tilde{T}_{n,n}\\
		&=&\tau_{\tilde{T}_{n,n}}(X_{n,p}T_{p,p+1}T_{p+1,p+2}\cdots T_{n-1,n}Y_{n,q}\tilde{T}_{q,q+1}\cdots\tilde{T}_{n-1,n}).
	\end{array}$$
	Moreover,
	$$\begin{array}{lll}
		&&\tilde{T}_{n,n}X_{n,p}T_{p,p+1}T_{p+1,p+2}\cdots T_{n-1,n}Y_{n,q}\tilde{T}_{q,q+1}\cdots\tilde{T}_{n-1,n}\\
		&=&X_{n,p}T_{p,p}T_{p,p+1}T_{p+1,p+2}\cdots T_{n-1,n}Y_{n,q}\tilde{T}_{q,q+1}\cdots\tilde{T}_{n-1,n}\\
		&=&X_{n,p}T_{p,p+1}T_{p+1,p+2}\cdots T_{n-1,n}T_{n,n}Y_{n,q}\tilde{T}_{q,q+1}\cdots\tilde{T}_{n-1,n}\\
		&=&X_{n,p}T_{p,p+1}T_{p+1,p+2}\cdots T_{n-1,n}Y_{n,q}\tilde{T}_{q,q}\tilde{T}_{q,q+1}\cdots\tilde{T}_{n-1,n}\\
		&=&X_{n,p}T_{p,p+1}T_{p+1,p+2}\cdots T_{n-1,n}Y_{n,q}\tilde{T}_{q,q+1}\cdots\tilde{T}_{n-1,n}\tilde{T}_{n,n},
	\end{array}$$
	which implies that $X_{n,p}T_{p,p+1}T_{p+1,p+2}\cdots T_{n-1,n}Y_{n,q}\tilde{T}_{q,q+1}\cdots\tilde{T}_{n-1,n}\in\ker\tau_{\tilde{T}_{n,n}}$. Using Corollary \ref{corollary3.5}, we can now derive that $X_{n,p}T_{p,p+1}T_{p+1,p+2}\cdots T_{n-1,n}Y_{n,q}\tilde{T}_{q,q+1}\cdots\tilde{T}_{n-1,n}=0$.
	Note that nonzero operators $T_{p,p+1},\cdots,T_{n-1,n}$,  $\tilde{T}_{q,q+1},\cdots$, $\tilde{T}_{n-1,n}$ and $Y_{n,q}$ have dense range and consequently $X_{n,p}=0$, which leads to a contradiction.
	
	Let us write the two operators $T,\tilde{T}$ in the form of $2\times 2$ block matrices:
	\begin{equation*}
		\setlength\abovedisplayskip{3pt}
		\setlength\belowdisplayskip{3pt}
		T=\left(\begin{smallmatrix}T_{n-1}&T_{(n-1)\times 1}\\0&T_{n,n}\end{smallmatrix}\right),\
		\tilde{T}=\left(\begin{smallmatrix}\tilde{T}_{n-1}&\tilde{T}_{(n-1)\times 1}\\0&\tilde{T}_{n,n}\end{smallmatrix}\right).
	\end{equation*}
	Correspondingly, $X,Y$ can be written as follows:
	\begin{equation*}
		\setlength\abovedisplayskip{3pt}
		\setlength\belowdisplayskip{3pt}
		X=\left(\begin{smallmatrix}X_{n-1}&X_{(n-1)\times 1}\\0&X_{n,n}\end{smallmatrix}\right),\
		Y=\left(\begin{smallmatrix}Y_{n-1}&Y_{(n-1)\times 1}\\0&Y_{n,n}\end{smallmatrix}\right),
	\end{equation*}
	where $X_{n-1}=\big((X_{i,j})\big)_{i,j=1}^{n-1}$, $Y_{n-1}=\big((Y_{i,j})\big)_{i,j=1}^{n-1}$ and $X_{(n-1)\times 1}=(X_{1,n},\cdots,X_{n-1,n})\textsuperscript{T}$, $Y_{(n-1)\times 1}=(Y_{1,n},\cdots,Y_{n-1,n})\textsuperscript{T}$.
	Following the relation $XT=\tilde{T}X$ and $TY=Y\tilde{T}$, we get
	\begin{equation*}
		\setlength\abovedisplayskip{3pt}
		\setlength\belowdisplayskip{3pt}
		\left(\begin{smallmatrix}X_{n-1}&X_{(n-1)\times 1}\\0&X_{n,n}\end{smallmatrix}\right)
		\left(\begin{smallmatrix}T_{n-1}&T_{(n-1)\times 1}\\0&T_{n,n}\end{smallmatrix}\right)
		=\left(\begin{smallmatrix}\tilde{T}_{n-1}&\tilde{T}_{(n-1)\times 1}\\0&\tilde{T}_{n,n}\end{smallmatrix}\right)
		\left(\begin{smallmatrix}X_{n-1}&X_{(n-1)\times 1}\\0&X_{n,n}\end{smallmatrix}\right)
	\end{equation*}
	and
	\begin{equation*}
		\setlength\abovedisplayskip{3pt}
		\setlength\belowdisplayskip{3pt}
		\left(\begin{smallmatrix}Y_{n-1}&Y_{(n-1)\times 1}\\0&Y_{n,n}\end{smallmatrix}\right)
		\left(\begin{smallmatrix}\tilde{T}_{n-1}&\tilde{T}_{(n-1)\times 1}\\0&\tilde{T}_{n,n}\end{smallmatrix}\right)
		=
		\left(\begin{smallmatrix}T_{n-1}&T_{(n-1)\times 1}\\0&T_{n,n}\end{smallmatrix}\right)
		\left(\begin{smallmatrix}Y_{n-1}&Y_{(n-1)\times 1}\\0&Y_{n,n}\end{smallmatrix}\right).
	\end{equation*}
	Then we know that $X_{n-1}T_{n-1}=\tilde{T}_{n-1}X_{n-1}$ and $Y_{n-1}\tilde{T}_{n-1}=T_{n-1}Y_{n-1}$.
	By an analogous proof, we can obtain that $X_{i,j}=0$ and $Y_{i,j}=0$ for all $1\le j<i\le n-1$, which shows that $X$ and $Y$ are both upper-triangular.
\end{proof}
Moreover, this Proposition \ref{proposition3.7} is also valid if the intertwining operator between any two operators in $\mathcal{F}_{n}(\mathcal{A})$ is invertible.

\end{document}